\newcommand{\pp}{\mathbb{P}}
\newcommand{\ee}{\mathbb{E}\,}
\newcommand{\ii}{\mathcal{I}}
\newcommand{\rr}{\mathbb{R}\,}
\newcommand{\eps}{\varepsilon}
\newcommand{\ch}{\mathcal{H}}
\newcommand{\one}{\mathbf{1}}
\newcommand{\bby}{\mbox{{\boldmath $\mathcal{Y}$}}}
\newcommand{\bbw}{\mbox{{\boldmath $\mathcal{W}$}}}
\newcommand{\by}{\mathbf{Y}}
\newcommand{\bw}{\mathbf{W}}
\newcommand{\cf}{\mathcal{F}}
\renewenvironment{proof}[1][\proofname]{\par \normalfont \trivlist
\item[\hskip\labelsep\itshape #1]\ignorespaces
}{%
\hspace*{\fill}$\Box$ \endtrivlist }
\renewcommand{\proofname}{{\bf Proof}}
\def\newrmtheorem#1{\@ifnextchar[{\@rmothm{#1}}{\@rmnthm{#1}}}
\def\@rmnthm#1#2{%
\@ifnextchar[{\@rmxnthm{#1}{#2}}{\@rmynthm{#1}{#2}}}
\def\@rmxnthm#1#2[#3]{\expandafter\@ifdefinable\csname #1\endcsname
{\@definecounter{#1}\@addtoreset{#1}{#3}%
\expandafter\xdef\csname the#1\endcsname{\expandafter\noexpand
  \csname the#3\endcsname \@rmthmcountersep \@rmthmcounter{#1}}%
\global\@namedef{#1}{\@rmthm{#1}{#2}}\global\@namedef{end#1}{\@endrmtheorem}}}
\def\@rmynthm#1#2{\expandafter\@ifdefinable\csname #1\endcsname
{\@definecounter{#1}%
\expandafter\xdef\csname the#1\endcsname{\@rmthmcounter{#1}}%
\global\@namedef{#1}{\@rmthm{#1}{#2}}\global\@namedef{end#1}{\@endrmtheorem}}}
\def\@rmothm#1[#2]#3{\expandafter\@ifdefinable\csname #1\endcsname
  {\global\@namedef{the#1}{\@nameuse{the#2}}%
\global\@namedef{#1}{\@rmthm{#2}{#3}}%
\global\@namedef{end#1}{\@endrmtheorem}}}
\def\@rmthm#1#2{\refstepcounter
    {#1}\@ifnextchar[{\@rmythm{#1}{#2}}{\@rmxthm{#1}{#2}}}
\def\@rmxthm#1#2{\@beginrmtheorem{#2}{\csname the#1\endcsname}\ignorespaces}
\def\@rmythm#1#2[#3]{\@opargbeginrmtheorem{#2}{\csname
       the#1\endcsname}{#3}\ignorespaces}
\def\@rmthmcounter#1{\noexpand\arabic{#1}}
\def\@rmthmcountersep{}
\def\@beginrmtheorem#1#2{\rm \trivlist
      \item[\hskip \labelsep{\bf #1\ #2\thmrmcounterend}]}
\def\@opargbeginrmtheorem#1#2#3{\rm \trivlist
      \item[\hskip \labelsep{\bf #1\ #2\ (#3)\thmrmcounterend}]}
\def\@endrmtheorem{\endtrivlist}
\def\thmrmcounterend{\hskip 0em\relax}
\def\newrmwntheorem#1#2{\expandafter\@ifdefinable\csname #1\endcsname%
\global\@namedef{#1}{\@rmwnthm{#1}{#2}}%
\global\@namedef{end#1}{\@endrmwntheorem}}
\def\newsltheorem#1{\@ifnextchar[{\@slothm{#1}}{\@slnthm{#1}}}
\def\@slnthm#1#2{%
\@ifnextchar[{\@slxnthm{#1}{#2}}{\@slynthm{#1}{#2}}}
\def\@slxnthm#1#2[#3]{\expandafter\@ifdefinable\csname #1\endcsname
{\@definecounter{#1}\@addtoreset{#1}{#3}%
\expandafter\xdef\csname the#1\endcsname{\expandafter\noexpand
  \csname the#3\endcsname \@slthmcountersep \@slthmcounter{#1}}%
\global\@namedef{#1}{\@slthm{#1}{#2}}\global\@namedef{end#1}{\@endsltheorem}}}
\def\@slynthm#1#2{\expandafter\@ifdefinable\csname #1\endcsname
{\@definecounter{#1}%
\expandafter\xdef\csname the#1\endcsname{\@slthmcounter{#1}}%
\global\@namedef{#1}{\@slthm{#1}{#2}}\global\@namedef{end#1}{\@endsltheorem}}}
\def\@slothm#1[#2]#3{\expandafter\@ifdefinable\csname #1\endcsname
  {\global\@namedef{the#1}{\@nameuse{the#2}}%
\global\@namedef{#1}{\@slthm{#2}{#3}}%
\global\@namedef{end#1}{\@endsltheorem}}}
\def\@slthm#1#2{\refstepcounter
    {#1}\@ifnextchar[{\@slythm{#1}{#2}}{\@slxthm{#1}{#2}}}
\def\@slxthm#1#2{\@beginsltheorem{#2}{\csname the#1\endcsname}\ignorespaces}
\def\@slythm#1#2[#3]{\@opargbeginsltheorem{#2}{\csname
       the#1\endcsname}{#3}\ignorespaces}
\def\@slthmcounter#1{.\noexpand\arabic{#1}}
\def\@slthmcountersep{}
\def\@beginsltheorem#1#2{\sl \trivlist
      \item[\hskip \labelsep{\bf #1\ #2\thmslcounterend}]}
\def\@opargbeginsltheorem#1#2#3{\sl \trivlist
      \item[\hskip \labelsep{\bf #1\ #2\ (#3)\thmslcounterend}]}
\def\@endsltheorem{\endtrivlist}
\def\thmslcounterend{\hskip 0em\relax}
\def\newslwntheorem#1#2{\expandafter\@ifdefinable\csname #1\endcsname%
\global\@namedef{#1}{\@slwnthm{#1}{#2}}%
\global\@namedef{end#1}{\@endslwntheorem}}
\begin{document}
%
% paper title
% can use linebreaks \\ within to get better formatting as desired
% Do not put math or special symbols in the title.
%\title{Nonnegative Deconvolution with Repeated Measurements}
%
\title{Approximation of Nonnegative Systems by Finite Impulse Response Convolutions}%
% author names and IEEE memberships
% note positions of commas and nonbreaking spaces ( ~ ) LaTeX will not break
% a structure at a ~ so this keeps an author's name from being broken across
% two lines.
% use \thanks{} to gain access to the first footnote area
% a separate \thanks must be used for each paragraph as LaTeX2e's \thanks
% was not built to handle multiple paragraphs
%

\author{Lorenzo Finesso,
        Peter Spreij
\thanks{L. Finesso is with the Institute of Biomedical Engineering,
National Research Council, CNR-ISIB, Padova, email: (lorenzo.finesso@isib.cnr.it)}%
\thanks{P. Spreij is with the Korteweg-de Vries Institute for Mathematics,
Universiteit van Amsterdam, Amsterdam, The Netherlands e-mail: (spreij@uva.nl)}% <-this % stops a space
\thanks{Corresponding author: L. Finesso.}
}
\maketitle

% As a general rule, do not put math, special symbols or citations
% in the abstract or keywords.
\begin{abstract}
We pose the deterministic, nonparametric, approximation problem for scalar nonnegative input/output systems via finite impulse response convolutions, based on repeated observations of input/output signal pairs. The problem is  converted into a nonnegative matrix factorization with special structure for which we use Csisz\'ar's I-divergence as the criterion of optimality. Conditions are given, on the input/output data, that guarantee the existence and uniqueness of the minimum. We propose a standard algorithm of the alternating minimization type for I-divergence minimization, and study its asymptotic behavior. We also provide a statistical version of the minimization problem and give its large sample properties.
\end{abstract}

% Note that keywords are not normally used for peerreview papers.
%\begin{IEEEkeywords}
%
%\end{IEEEkeywords}

% For peer review papers, you can put extra information on the cover
% page as needed:
% \ifCLASSOPTIONpeerreview
% \begin{center} \bfseries EDICS Category: 3-BBND \end{center}
% \fi
%
% For peerreview papers, this IEEEtran command inserts a page break and
% creates the second title. It will be ignored for other modes.
\IEEEpeerreviewmaketitle

%
%
%%\color{blue}
%\pagestyle{empty}
%\begin{center}
%\Large
%Nonnegative Deconvolution with Repeated Measurements
%\bigskip\\
%Lorenzo Finesso\footnote{Institute of Biomedical Engineering,
%National Research Council, CNR-ISIB, Padova, {\tt lorenzo.finesso@isib.cnr.it}} and Peter
%Spreij\footnote{Korteweg-de Vries Institute for Mathematics,
%Universiteit van Amsterdam, Amsterdam, {\tt spreij@uva.nl}}
%
%\end{center}
%\bigskip
%\begin{abstract}\noindent
%We pose the deterministic nonparametric, nonnegative deconvolution pro\-blem for scalar input/output systems based on repeated observations of input/output signal pairs. The problem is converted into a nonnegative matrix factorization with special structure for which we use Csisz\'ar's I-divergence as the criterion of optimality. Conditions on the input/output data are given that guarantee the existence and uniqueness of the minimum. We propose a standard algorithm of the alternating minimization type for I-divergence minimization, and study its asymptotic behavior. We also provide a statistical version of the minimization problem and give its large sample properties.
%\end{abstract}
%
%\maketitle
%
%\pagestyle{plain}
%\setcounter{page}{1}

\section{Introduction}\label{section:intro}

Inverse problems are at the core of system modeling and identification. Since the publication of \cite{tikhonov} they have been the subject of a vast technical literature in applied mathematics, engineering, and specialized applied fields. The focus of this paper is on the subclass of  problems  for which the models are linear and time (or space) invariant. Even within this much narrower field the literature is very rich, with many of the contributions leaning towards specific computational aspects of interest for specialized applications.

The goals of the present paper are to pose the problem of approximation of nonnegative i/o system by finite impulse response convolutions, when repeated input/output measurements are available, to propose an algorithm for its solution, and to study its convergence properties. We do not deal with the computational aspects, which must be tailored on the specific application to be of effective value. Our attention will moreover be restricted to nonnegative impulse responses, i.e.\ those for which positive inputs result in positive outputs.

Early contributions for the class of strictly related  nonnegative
deconvolution problems, are \cite{Vardietal1985}, \cite{snyderetal1992} for single input/output observations. Following the choice made in those early contributions the criterion of optimality will be Csisz\'ar's I-divergence, which as argued in~\cite{snyderetal1992} is the best choice for approximation problems under nonnegativity constraints.  From the mathematical point of view the techniques that have been used in~\cite{fs2006} to analyse a nonnegative matrix factorization algorithm are perfectly suited to deal with
the present approximation problem and provide several benefits over the traditional analyses contained in~\cite{snyderetal1992}.

 We provide explicit conditions for the existence and uniqueness of the minimizer of the criterion  in terms of the data. The algorithm that minimizes the informational divergence criterion is of the alternating minimization type, and the optimality conditions (the Pythagorean relations) are satisfied at each step. Exploiting this, we are able to present a proof of convergence which is more transparent than other proofs in the literature, e.g.~\cite{cover1984}, \cite{snyderetal1992}, and \cite{Vardietal1985}. Contrary to previous contributions our treatment allows for $m$  multiple input/\-output pairs. The algorithm for the case $m=1$ has been studied in~\cite{snyderetal1992}. An advantage of allowing multiple input/output pairs is that this setting leads easily to a statistical analysis. In the last section of the paper we provide a statistical version of the minimization problem and give its large sample properties.
%
%Contributions: multivariate setting, lifting $\to$ sound foundation of minimizing algorithm, optimality, Pythagorean identities, clear, short, transparent proof of convergence, explicit conditions for existence and uniqueness of minimum in terms of the data

We emphasize that here we pursue a nonparametric approach to the approximation of a given input/output system by a linear time invariant system. No assumptions on the order, which could as well be infinite, are being made. In doing so we view things from a completely different angle than is usual for the identification or realization of (nonnegative) linear systems, see~\cite{bevenutifarina2004} for instance.
The contributions of the paper are theoretical.  Possible applications of the algorithm are in the field of image processing and emission tomography. For these we refer for instance to~\cite{snyderetal1992,sullivan2007,Vardietal1985}  and the references therein.

A brief summary of the paper follows. In Section~\ref{section: problem} we state the problem and formulate conditions for strict convexity of the objective function, and hence for the existence and uniqueness of the solution. In Section~\ref{sec:lift} the original problem is lifted into a higher dimensional setting, thus making it amenable to alternating minimization. The optimality properties (Pythagoras rules) of the ensuing partial minimization problems are established here. In Section~\ref{section:algorithm} we derive the iterated minimization algorithm combining the solutions of the partial minimizations and we present its first properties. Section~\ref{section:asymptotics} is devoted to the convergence analysis of the algorithm. The  Pythagoras rules facilitate compact and transparent proofs. In the last Section~\ref{section:stats}, taking advantage of the repeated input/output measurements setup, we give a concise treatment of a statistical version of the approximation problem, focusing on its large sample properties.

\section{Problem statement and preliminary results}\label{section: problem}
A discrete time, causal, convolutional system $\mathcal S_h$ maps input sequences  $(u_t)_{t \in \mathbb N} \in  \mathbb R^\mathbb N$ into output sequences  $(y_t)_{t \in \mathbb N} \in  \mathbb R^\mathbb N$, and is completely characterized by an impulse response sequence  $(h_t)_{t \in \mathbb N} \in  \mathbb R^\mathbb N$, such that
\begin{equation} \label{convsys}
y_t =  \mathcal S_h u_t = \sum_{k=0}^t  h_k u_{t-k}, \qquad t \in \mathbb N.
\end{equation}

%Consider a linear time invariant dynamic input-output model in which the
%output is related to the input by the following convolution relation.
%\begin{align}
%y_t & =\sum_{k=0}^t h_{t-k}u_k+y^0_t \\
%& = \sum_{j=0}^{t}h_ju_{t-j}+y^0_t\label{eq:yt},
%\end{align}
%where the $y_t$ for $t=0,\ldots,n$ are the outputs, the $u_t$ the inputs and
%where the sequence $(h_k)$, $k\geq 0$ are the system parameters, or impulse
%responses and $y^0_t$ is the transient behavior. It is assumed that $y^0_t\to
%0$ for $t\to\infty$. Sometimes it is convenient to assume a stationary
%version of this relation, obtained by an infinite sum, assuming all inputs in
%the infinite past available. One then has
%\begin{equation}
%y_t=\sum_{j=0}^\infty h_{j}u_{t-j},
%\end{equation}
%where by definition the sum is convergent. We see that
%$y^0_t=\sum_{j=t+1}^\infty h_ju_{t-j}$. A sufficient condition for bounded
%input sequences that yields convergent sums is $\sum_{k\geq 0}|h_k|<\infty$.
%Under the same condition, one then gets $y^0_t\to 0$. In practice one often has to take $u_k=0$ for $k<0$, in which case $y^0_t=0$ automatically. If necessary we might opt %for the convention $h_0=1$, by rescaling.

Rewriting equation~\eqref{convsys}, for $t=0,\ldots,N$, in matrix form, one gets the system of equations
\begin{equation}  \label{eq:matrix}
\begin{pmatrix}
y_0 \\
\vdots \\
\vdots \\
y_N
\end{pmatrix}
=
\begin{pmatrix}
h_0 & 0   & \cdots & \cdots & 0 \\
h_1 & h_0 & 0      & \cdots & 0 \\
\vdots &    \ddots      & \ddots  &  \ddots   & \vdots \\
\vdots &   &  \ddots      & \ddots  &  0   \\
h_N & \cdots & \cdots & h_1 & h_0
\end{pmatrix}
\begin{pmatrix}
u_0 \\
\vdots \\
\vdots \\
u_N
\end{pmatrix},
\end{equation}
compactly written as
\begin{equation}\label{eq:yu}
y=T(h)u,
\end{equation}
having introduced the notations $u=(u_0,\ldots,u_N)^\top$, \,  $y=(y_0,\ldots,y_N)^\top$ and $T(h)$ for the matrix in~\eqref{eq:matrix}. For $m$ input sequences $u^j$, with corresponding output sequences $y^j$, where $j=1,\dots, m$, equation~\eqref{eq:yu} becomes
\begin{equation}\label{eq:YU}
Y=T(h)U,
\end{equation}
where $Y=$$(y^1,\ldots,y^m)$$\in$$\rr^{(N+1)\times m}$ and $U=$$(u^1,\ldots,u^m)$$\in\rr^{(N+1)\times m}$.

%In a stochastic context one might think of {\em iid} sequences $u^i$ and $y^i$ in which case \eqref{eq:yu} also yields
%\[
%\ee y = T(h)\,\ee u,
%\]
%and \eqref{eq:YU} can be used to estimate the $h_k$, or \eqref{eq:yu} with $y$ and $u$ replaced by the sample averages of the $y^i$ and $u^i$, justified by the law of large %numbers.

%When $y^j = \mathcal S_h u^j$, for $j=1, \dots m$, for some $h$, then equation~\eqref{eq:YU} holds exactly.

\begin{convention} \label{convention}
In expressions containing elements of $U$ the first index is allowed to run out of range, posing $U_{ij}:=0$ for all $i<0$.
\end{convention}

In many practical contexts the inputs and outputs $U$ and $Y$ are directly measured \emph{data}, while $h$ is not known or, more generally, a causal convolutional system $\mathcal S_h$ is not known to exist such that $Y=T(h)U$. In either of these cases an interesting problem is to find $h$ such that the approximate relation
\begin{equation} \label{eq:approx}
Y \approx T(h)U
\end{equation}
is the best possible with respect to a specified loss criterion.

In the paper we concentrate on this problem, under the extra condition that~\eqref{eq:approx} is the approximate representation of the behavior of a positive system, i.e.\ all quantities in~\eqref{eq:approx} are nonnegative real numbers. The goal is the determination of the \emph{best} nonnegative sequence $h=(h_0,\ldots,h_N)^\top$, where the loss criterion, chosen to measure the discrepancy between the left and the right hand side in~\eqref{eq:approx}, is the {\em I-divergence} between nonnegative matrices. See~\cite{cs1991} for a justification from first principles.

For given nonnegative matrices $M$ and $N$ of the same size, $M$ is said to be absolutely continuous with respect to $N$, denoted $M \ll N$, if elementwise~$M_{ij}=0$ for all $(i, j)$ such that $N_{ij}=0$. The {\em I-divergence} between the nonnegative matrices of the same size $M$, and $N$ is defined as
\begin{equation} \label{def:Idiv}
\ii(M||N):= \sum_{ij}\left(M_{ij}\log\frac{M_{ij}}{N_{ij}}-M_{ij}+N_{ij}\right),
\end{equation}
if $M \ll N$, otherwise set $\ii(M||N):=+\infty$
%
%\begin{equation} \label{def:Idiv}
%\ii(M||N):= \left\{\begin{array}{ll}
%                 \sum_{ij}\left(M_{ij}\log\frac{M_{ij}}{N_{ij}}-M_{ij}+N_{ij}\right), & \mbox{if}\,\, M \ll N,\\
%                 +\infty, & \mbox{otherwise}.
%                \end{array}
%         \right.
%\end{equation}
In definition~(\ref{def:Idiv}) we also adopt the usual conventions $\frac{0}{0}=0$ and $0\log 0 = 0$.
This leads to
\begin{problem}\label{problem:minh}
For given $Y\geq 0$  and $U \geq 0 $, find a nonnegative vector $h=(h_0,\ldots,h_N)^\top\in \ch := \mathbb R^{N+1}_+$ such that
\[
F(h):=\ii(Y||T(h)U)
\]
is minimized over $\ch$.
\end{problem}

\smallskip
\begin{remark}\label{remark:s}
In Problem \ref{problem:minh} one can assume, without loss of generality, that $S:=\sum_{ij}Y_{ij}=1$. Indeed, for any $S>0$, put $\widetilde{Y}_{ij}=Y_{ij}/S$ and $\widetilde{U}_{ij}=U_{ij}/S$. It then holds that $\ii(Y||T(h)U)=S\ii(\widetilde{Y}||T(h)\widetilde{U})$, and since $S$ does not depend on $h$ the two problems have the same minimizers. This property will be useful in Section~\ref{section:asymptotics}.
\end{remark}

\smallskip
Problem~\ref{problem:minh} is well posed if there exists at least one $h\in\rr^{N+1}_+$ such that $F(h)$ is finite. From definition~(\ref{def:Idiv}) it follows that $F(h)$ is finite if and only if $Y \ll T(h)U$, or equivalently iff $(T(h)U)_{ij}>0$ for all $(i,j)$ such that $Y_{ij}>0$. Since
\begin{equation} \label{eq:thuij}
(T(h)U)_{ij} = \sum_{k=0}^i h_k U_{i-k,j},
\end{equation}
the following condition characterizes the data $(U, Y)$ that produce a well posed Problem~\ref{problem:minh}.
\begin{cond} \label{cond:wp}
For all $(i,j)$ such that $Y_{ij}>0$ there exists $\ell \le i$ such that $U_{\ell j}>0$.
\end{cond}
\noindent
Condition \ref{cond:wp} is rather weak.  In terms of the data sequences it states that if $y_i^j>0$ then $u_\ell^j>0$ for some $\ell \le i$, i.e. if the present output is strictly positive then the present or at least one of the past inputs must be strictly positive. This condition is always satisfied if the data $(U, Y)$ are produced by linear, causal systems.
%
%in particular it imposes that if $y_0^j>0$ then $u_0^j>0$: if an output sequence $y^j$ starts strictly positive, the corresponding input $u^j$ must start strictly positive.

We prove below that, under a  stronger condition on the data $(U, Y)$, the loss $F(h)$ is strictly convex, a property that simplifies the study of the existence and uniqueness of the solution of Problem  \ref{problem:minh}.

\begin{cond} \label{cond:sc}
For all $i\in\{0,\ldots,N\}$ there exists $j\in\{1,\ldots,m\}$ such that $Y_{ij}>0$ and $U_{0j}>0$.
\end{cond}
%MEMO--- IN CASE $m=1$ REQUIRES $u_0>0$ AND $y_i>0$ FOR ALL $i$

\noindent
Condition \ref{cond:sc} is strictly stronger than Condition \ref{cond:wp}, but still rather weak. Physically it states that for each time $i$ there exists at least one experiment $j$ with strictly positive initial input $U_{0j}$ and strictly positive output $Y_{ij}$ at time $i$.
This condition holds e.g.\ under the (stronger) assumption that for some experiment $j$, with initial input $U_{0j}>0$, the output trajectory $Y_{ij}$ is strictly positive.

\begin{lemma}\label{lemma:fconvex}
Under Condition \ref{cond:sc} the loss $F(h)$ is strictly convex on its effective domain, that is the set
$\{\,h: \, F(h)< \infty \, \}$.
\end{lemma}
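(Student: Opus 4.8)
The plan is to show that $F$ is a composition of an affine map $h \mapsto T(h)U$ with the function $N \mapsto \ii(Y \| N)$, and to exploit convexity properties of the latter together with an injectivity argument for the former. First I would recall that for a fixed nonnegative matrix $Y$, each summand $N_{ij} \mapsto Y_{ij}\log(Y_{ij}/N_{ij}) - Y_{ij} + N_{ij}$ is convex in $N_{ij} > 0$, and is strictly convex precisely when $Y_{ij} > 0$ (the second derivative is $Y_{ij}/N_{ij}^2$), while for $Y_{ij} = 0$ the summand reduces to the linear function $N_{ij}$. Hence $\ii(Y\|\cdot)$ is convex on the nonnegative matrices, and strictly convex in those coordinates $(i,j)$ for which $Y_{ij} > 0$. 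Writing $N(h) := T(h)U$, which is linear in $h$ by~\eqref{eq:thuij}, convexity of $F$ on its effective domain is immediate; the real work is strictness.

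For strict convexity, suppose $h^0 \ne h^1$ both lie in the effective domain, set $h^\lambda = (1-\lambda)h^0 + \lambda h^1$, and observe that by linearity $N(h^\lambda)_{ij} = (1-\lambda)N(h^0)_{ij} + \lambda N(h^1)_{ij}$. By the coordinatewise strict convexity noted above, $F(h^\lambda) \le (1-\lambda)F(h^0) + \lambda F(h^1)$ with equality only if, for every $(i,j)$ with $Y_{ij} > 0$, one has $N(h^0)_{ij} = N(h^1)_{ij}$. So it suffices to show that the linear map $h \mapsto \big(N(h)_{ij}\big)_{(i,j):\,Y_{ij}>0}$ is injective, i.e. that $N(h^0)_{ij} = N(h^1)_{ij}$ for all such $(i,j)$ forces $h^0 = h^1$. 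Equivalently, letting $g = h^0 - h^1$, I must show that $(T(g)U)_{ij} = \sum_{k=0}^i g_k U_{i-k,j} = 0$ for all $(i,j)$ with $Y_{ij} > 0$ implies $g = 0$.

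Here Condition~\ref{cond:sc} enters, and this injectivity step is the crux of the argument. I would argue by induction on $i$ from $0$ to $N$. For $i = 0$: pick $j$ with $Y_{0j} > 0$ and $U_{0j} > 0$ (Condition~\ref{cond:sc}); then $0 = \sum_{k=0}^0 g_k U_{-k,j} = g_0 U_{0j}$ forces $g_0 = 0$. For the inductive step, assume $g_0 = \cdots = g_{i-1} = 0$; choose $j$ with $Y_{ij} > 0$ and $U_{0j} > 0$; then $0 = \sum_{k=0}^i g_k U_{i-k,j} = g_i U_{0j}$ (all lower-index terms vanish by the inductive hypothesis), so $g_i = 0$. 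This completes the induction, giving $g = 0$, hence injectivity, hence strict convexity of $F$ on its effective domain. One should also check that the effective domain is convex, which follows since $F$ is convex (or directly: $N(h^0) \gg$-type absolute-continuity conditions are preserved under taking strictly positive combinations because $N(h^\lambda)_{ij} > 0$ whenever $N(h^0)_{ij} > 0$ or $N(h^1)_{ij} > 0$). I expect no serious obstacle beyond bookkeeping; the only delicate point is making sure the equality case in Jensen/convexity is handled coordinatewise correctly and that the induction uses Condition~\ref{cond:sc} at the right index $i$.
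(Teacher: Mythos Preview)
Your proof is correct, and it reaches the same linear-algebraic core as the paper---namely, that under Condition~\ref{cond:sc} the map $g\mapsto\big((T(g)U)_{ij}\big)_{(i,j):\,Y_{ij}>0}$ is injective, proved by a triangular/inductive argument exploiting $U_{0j}>0$. The route differs, however. The paper computes the Hessian $H_{kl}=\sum_{ij}\frac{Y_{ij}}{(T(h)U)_{ij}^2}U_{i-k,j}U_{i-l,j}$ explicitly and shows it is positive definite by analysing $x^\top Hx=\sum_{ij}\frac{Y_{ij}}{(T(h)U)_{ij}^2}(U*x)_{ij}^2$; setting this to zero forces $Y_{ij}(U*x)_{ij}=0$ for all $i,j$, and then the triangular system with diagonal entries $\sum_jY_{kj}U_{0j}$ yields $x=0$. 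Your approach instead factors $F$ as the composition of the linear map $h\mapsto T(h)U$ with the separable convex function $N\mapsto\ii(Y\|N)$, and handles the equality case in the convexity inequality coordinatewise, bypassing calculus entirely. Your version is more elementary and arguably cleaner for this lemma in isolation; the paper's Hessian computation, on the other hand, is reused later (see the proof of Proposition~\ref{prop:limcrit} in Section~\ref{section:stats}), so it pays for itself downstream.
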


\begin{proof}
The elements $H_{kl}$ of the Hessian $H$ of the loss $F(h)$ are
\[
H_{kl}:=\frac{\partial^2 F}{\partial h_k\partial h_l}(h)
=\sum_{ij}\frac{Y_{ij}}{(T(h)U)_{ij}^2}U_{i-k,j}U_{i-l,j}.
\]
It is enough to show that $H$ is strictly positive definite. Let $x\in\rr^{N+1}$, then
\[
x^\top Hx= \sum_{kl}H_{kl}x_kx_l = \sum_{ij}\frac{Y_{ij}}{(T(h)U)_{ij}^2}(U*x)_{ij}^2,
\]
where $(U*x)_{ij}=\sum_lx_lU_{i-l,j}$. Let $x^\top Hx=0$. By nonnegativity of the summands, this only happens if $\frac{Y_{ij}}{(T(h)U)_{ij}^2}(U*x)_{ij}^2=0$ for all $i,j$. Since $F(h)<\infty$ on its effective domain, we must have $T(h)U_{ij}>0$ as soon as $Y_{ij}>0$. Hence $x^\top Hx=0$ iff $Y_{ij}(U*x)_{ij}=0$ for all $i,j$, which gives a system of linear equations in $x$. For every $i$ fixed and summing over $j$ one explicitly obtains $\sum_k(\sum_j Y_{ij}U_{i-k,j})x_k=0$. This gives a system of equations in which the matrix of coefficients is lower triangular with $\sum_jY_{kj}U_{0j}$ as the $k$-th diagonal element.
Hence this system of equations has $x=0$ as its only solution iff $\sum_jY_{kj}U_{0j}>0$ for all $k$, but the latter constraint is guaranteed by Condition \ref{cond:sc}, hence the Lemma is proved.
\end{proof}

We are now ready to state the existence and uniqueness result. The proof is deferred to Section~\ref{section:algorithm}.
\begin{proposition}\label{proposition:minh}
Assume Condition~\ref{cond:sc} is satisfied, \ then Problem~\ref{problem:minh} admits a unique solution.
\end{proposition}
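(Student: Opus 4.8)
The plan is to get uniqueness for free from the strict convexity already proved in Lemma~\ref{lemma:fconvex}, and to establish existence by the direct method of the calculus of variations: show that $F$ is lower semicontinuous and coercive on $\ch$, so that a minimizing sequence cannot escape to infinity and subconverges to a minimizer.

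For the uniqueness half, note first that $F=\ii(Y\,\|\,\cdot)\circ(h\mapsto T(h)U)$ is convex on $\rr^{N+1}$, being the composition of the (jointly) convex map $\ii(Y\,\|\,\cdot)$ with the affine map $h\mapsto T(h)U$; hence its effective domain $D:=\{h\in\ch:F(h)<\infty\}$ is convex. If $h'$ and $h''$ were two distinct minimizers, both would lie in $D$, so would the midpoint $\tfrac12(h'+h'')$, and strict convexity of $F$ on $D$ (Lemma~\ref{lemma:fconvex}) would give $F\big(\tfrac12(h'+h'')\big)<\tfrac12 F(h')+\tfrac12 F(h'')$, contradicting minimality. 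So the minimizer, once shown to exist, is unique.

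For existence: Condition~\ref{cond:sc} implies Condition~\ref{cond:wp}, so $D\neq\emptyset$; fix $h_0\in D$ and put $c_0:=F(h_0)<\infty$. Lower semicontinuity of $F$ on $\ch$ follows because $h\mapsto T(h)U$ is continuous and $(M,N)\mapsto\ii(M\,\|\,N)$ is lower semicontinuous on pairs of nonnegative matrices. For coercivity, write, for any $h$ with $F(h)<\infty$,
\[
F(h)=\sum_{ij:\,Y_{ij}>0}\big(Y_{ij}\log Y_{ij}-Y_{ij}\big)+\sum_{ij}(T(h)U)_{ij}-\sum_{ij:\,Y_{ij}>0}Y_{ij}\log (T(h)U)_{ij}.
\]
Interchanging the order of summation gives $\sum_{ij}(T(h)U)_{ij}=\sum_{k=0}^N c_k h_k$ with $c_k=\sum_{j=1}^m\sum_{l=0}^{N-k}U_{lj}\ge\sum_{j=1}^m U_{0j}$, and the latter is strictly positive by Condition~\ref{cond:sc}; thus $\min_k c_k>0$. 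On the other hand $(T(h)U)_{ij}\le\|h\|_1\max_{l,j}U_{lj}$, so $-\sum_{ij:\,Y_{ij}>0}Y_{ij}\log(T(h)U)_{ij}\ge -S\log\big(\|h\|_1\max_{l,j}U_{lj}\big)$ with $S=\sum_{ij}Y_{ij}$. Hence there is a constant $C$ with $F(h)\ge C+(\min_k c_k)\|h\|_1-S\log\|h\|_1$ for all $h\in\ch$ with $\|h\|_1$ large (trivially also where $F(h)=\infty$), and the right-hand side tends to $+\infty$ as $\|h\|_1\to\infty$. Therefore $\{h\in\ch:F(h)\le c_0\}$ is nonempty, closed (lower semicontinuity) and bounded, hence compact, and $F$ attains its minimum on it; by construction this is the global minimum over $\ch$.

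The main obstacle is the coercivity estimate, and within it the point that \emph{every} coefficient $c_k$ of the linear term $\sum_k c_k h_k$ is strictly positive — this is precisely where Condition~\ref{cond:sc} (in the form $U_{0j}>0$ for some $j$) is used, and it is the reason the stronger condition, rather than Condition~\ref{cond:wp}, is imposed. Once that is in hand, the logarithmic term is easily dominated by the linear growth via $(T(h)U)_{ij}\le\|h\|_1\max_{l,j}U_{lj}$, and the rest of the argument is the standard compactness routine.
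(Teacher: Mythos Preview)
Your proof is correct and takes a genuinely different route from the paper's. The paper defers the proof of this proposition to Section~\ref{section:algorithm}, after the alternating-minimization algorithm has been set up; it argues existence by running one step of Algorithm~\ref{algorithm:h} from $h^0=\one$ to land in the compact simplex $\mathcal{S}=\{h:\sum_k\alpha_{N-k}h_k=S\}$ (via Proposition~\ref{proposition:hstar}), then further restricts to a closed set $\{h:(T(h)U)_{ij}\ge\eps\text{ whenever }Y_{ij}>0\}$ on which $F$ is continuous, and invokes Weierstrass there. Your argument is the standard direct-method package---lower semicontinuity plus coercivity on all of $\ch$---and is entirely self-contained: it needs no forward reference to the algorithm or to the lifted problems of Section~\ref{sec:lift}. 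The paper's route has the virtue of tying existence to the algorithmic structure that is the real subject of the paper, and of isolating the compact set $\mathcal{S}$ that the iterates actually inhabit; yours is cleaner as a stand-alone proof and could live in Section~\ref{section: problem} where the proposition is stated. One small remark on your closing commentary: the positivity of every $c_k$ only uses the consequence $U_{0\centerdot}>0$ of Condition~\ref{cond:sc}; the full strength of that condition (the coupling with $Y_{ij}>0$ for each row $i$) enters your argument only through Lemma~\ref{lemma:fconvex} in the uniqueness half, not through the coercivity estimate.
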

We write below the standard Kuhn-Tucker necessary conditions for a vector $h$ to be a minimizer of $F(h)$.  Note that, due to the convexity of the divergence $F(\cdot)$ and the concavity of the nonnegativity constraint, the Kuhn-Tucker conditions are sufficient for optimality (see e.g.~\cite[Theorem~2.19]{zangwill}). Condition \ref{cond:sc}, guarantees the strict convexity of $F(\cdot)$ and therefore the uniqueness of the optimizer. Here, and elsewhere in the paper, a dot $\centerdot$ in place of an index denotes summation with respect to the dotted index, e.g. $M_{i\centerdot}:= \sum_j M_{ij}$

\smallskip\noindent
Denoting $\nabla F(h)_k:=\frac{\partial F(h)}{\partial h_k}$, for $k=0,\ldots, N$, the Kuhn Tucker conditions assert that, if the vector $h$ minimizes $F(h)$ subject to the constraints $h_k\ge 0$, then
\begin{align}
\nabla F(h)_k & = 0 \quad \mbox{if} \quad h_k>0, \label{eq:kt1}\\
\nabla F(h)_k & \leq 0 \quad \mbox{if} \quad h_k=0,\label{eq:kt2}
\end{align}
where the partial derivatives $\nabla F(h)_k$ are explicitly given by
\begin{equation}\label{eq:gradient}
\nabla F(h)_k=-\sum_{j=1}^m\sum_{i=k}^{N}\frac{Y_{ij}U_{i-k,j}}{\sum_ph_{p}U_{i-p,j}}+\sum_{l=0}^{N-k}U_{l\centerdot}.
%< \sum_{l=0}^{N-k}U_{l\centerdot}.
\end{equation}

\begin{example}\label{example:toy}
To illustrate that the minimizers $h$ may be interior points (all $h_k>0$) or boundary points (some $h_k=0$), we consider the following toy example. Let $m=1$ and $N=1$, then $T(h)U$ is a two dimensional vector with components $h_0u_0$ and $h_0u_1+h_1u_0$. The function $F$ is given by
\begin{IEEEeqnarray*}{rCl}
F(h)&=&y_0\log\frac{y_0}{u_0h_0}-y_0+u_0h_0+y_1\log\frac{y_1}{h_0u_1+h_1u_0}  \\
&& - \: y_1+h_0u_1+h_1u_0.
\end{IEEEeqnarray*}
Condition~\ref{cond:wp} for well-posedness reads: if $y_0>0$, then $u_0>0$, and if $y_1>0$ then $u_0>0$ or $u_1>0$. The Condition~\ref{cond:sc} for strict convexity reads: $y_0y_1u_0>0$. One checks by immediate inspection that strict convexity doesn't hold if $y_0=0$ of $y_1=0$.

%\smallskip\noindent
In this simple case the minimizing $h^*=(h_0^*, h_1^*)^\top$ can be written explicitly by inspection. Since $F(h)\ge 0$, with equality if and only if $Y=T(h)U$, one gets that, if $y_1u_0-y_0u_1\geq 0$, then $h_0^*=\frac{y_0}{u_0}$ and $h_1^*=\frac{y_1u_0-y_0u_1}{u_0^2}$ satisfies the constraints $h^*\ge0$ and attains the minimum $F(h^*)=0$. On the other hand, if $y_1u_0-y_0u_1 < 0$, then the boundary point $h_0^*=\frac{y_0+y_1}{u_0+u_1}$, and $h_1^*=0$ satisfies the constraints $h^*\ge 0$. Checking that $h^*$  satisfies the Kuhn Tucker conditions guarantees that it is a minimizer. From equation~(\ref{eq:gradient}) one gets $\frac{\partial F}{\partial h_0}(h^*)=0$, and
$\frac{\partial F}{\partial h_1}(h^*)=\frac{u_0}{u_1(y_0+y_1)}(u_1y_0-u_0y_1) \geq 0$, in agreement with (\ref{eq:kt1}) and (\ref{eq:kt2}). See also Remark~\ref{remark:m=1} below for more general considerations.
%
%Specializing the second case to $y_1=0$ (and hence $y_0u_1>0$), we get $h_0^*=\frac{y_0}{u_0+u_1}$. Although $F$ is in this case not strictly convex, there still exists a unique minimizer.

\end{example}

\medskip
In solving Problem~\ref{problem:minh}, minimizers $h^*$ at the boundary of $\mathcal H=\mathbb R^{N+1}_+$, i.e. with some zero components, are the rule rather than an exception. This is illustrated in the following remark.

\begin{remark}\label{remark:m=1}
We analyse here the conditions that produce interior and boundary solutions of Problem~\ref{problem:minh}, limiting the discussion to the case $m=1$ which is more transparent. If the minimizer $h$ belongs to the interior of the domain $\mathcal H$, then it can be found imposing that $\nabla F(h)_k=0$ for all $k=0, \dots, N$, i.e. from equation~\eqref{eq:gradient},
\begin{equation}\label{eq:grad1-m=1}
\nabla F(h)_k=-\sum_{i=k}^{N}\frac{y_{i}u_{i-k}}{\sum_ph_{p}u_{i-p}}+\sum_{l=0}^{N-k}u_{l}=0.
%< \sum_{l=0}^{N-k}U_{l\centerdot}.
\end{equation}
Assume that $u_0>0$. Denoting $t_i:=\sum_p h_{p}u_{i-p}$, the above constraints become
\begin{equation}\label{eq:grad2-m=1}
\nabla F(h)_k=-\sum_{i=k}^{N}\frac{y_{i}u_{i-k}}{t_i}+\sum_{i=k}^{N}u_{i-k} = 0.
\end{equation}
For $k=N$ this reduces to
\begin{equation*}
-\frac{y_{N}u_{0}}{t_N}+u_{0}=0,
\end{equation*}
and one gets $t_N=y_N$. Substitution into equation~(\ref{eq:grad2-m=1}) for $k=N-1$ gives,
\begin{equation*}
-\frac{y_{N-1}u_{0}}{t_{N-1}}+u_{0}-\frac{y_{N}u_{1}}{t_N}+u_{1}=0,
\end{equation*}
and one gets $t_{N-1}=y_{N-1}$. Completing the recursion one gets the system of equations satisfied by the optimal $h$,
\begin{equation}\label{eq:exact}
y_i=t_i = \sum_{p=0}^i h_p u_{i-p}, \qquad \text{for} \,\, i=0, \dots, N.
\end{equation}
In other words the only interior solution, if it exists, corresponds to perfect modeling, $Y=T(h)U$.
%IS IT TRIVIAL THAT, CORRESPONDINGLY, $F(h^*)=0$ AS ONE EXPECTS IN THE CASE OF PERFECT MODELING?
 Note that, to find the unknown $h$, system~(\ref{eq:exact}) can be rewritten as follows  \[
\begin{pmatrix}
y_0 \\
\vdots \\
\vdots \\
y_N
\end{pmatrix}
=
\begin{pmatrix}
u_0 & 0   & \cdots & \cdots & 0 \\
u_1 & u_0 & 0      & \cdots & 0 \\
\vdots &    \ddots      & \ddots  &  \ddots   & \vdots \\
\vdots &   &  \ddots      & \ddots  &  0   \\
u_N & \cdots & \cdots & u_1 & u_0
\end{pmatrix}
\begin{pmatrix}
h_0 \\
\vdots \\
\vdots \\
h_N
\end{pmatrix},
\]
which is an alternative way of writing \eqref{eq:matrix}. The computation of the solution by Cramer's rule gives necessary and sufficient conditions on the data $(u, y)$, in terms of a number of determinants, for the existence of a feasible solution, $h \in \mathcal H$. If at least one of these conditions is violated, a feasible solution of Problem~\ref{problem:minh} will necessarily be a boundary point. In this sense boundary point solutions are the rule rather than the exception.
\end{remark}

%The toy Example~\ref{example:toy} above provides a computation of this in explicit form. The simple considerations above break down in the case where $m>1$, due to the summation over $j$ in \eqref{eq:gradient}.
%\end{example}

\section{Lifted version of Problem~\ref{problem:minh}} \label{sec:lift}
\setcounter{equation}{0}

To solve Problem~\ref{problem:minh} we propose an alternating minimization algorithm, following the approach adopted for the derivation of the NMF algorithm in \cite{fs2006} and. In particular, we use a variation on the lifting technique pioneered by \cite{ct1984} and followed in \cite{fs2006}, recasting Problem~\ref{problem:minh} as a double minimization in a larger space. Here and in the following sections bold capitals, e.g.\ $\mathbf M$, will denote matrices (tensors actually) with three indices. The ambient space in which the lifted problem objects live is $\ch_3:=\rr_+^{(N+1)\times (N+1)\times m}$, and specifically on $\bby$, and $\bbw$, two subsets of $\ch_3$ defined below in terms of the given data $(Y, U)$,
$$
\bby  =\left\{\,\,\by\in \ch_3: \quad \by_{i\centerdot j}=Y_{ij}\,\,\right\},
$$
%$$
%\bbw  =\left\{\bw\in \ch_3: \quad \bw_{ilj} = \left\{\begin{array}{ll}
%               h_{l}U_{i-l,j}, & \,\,\, i\geq l ,\\
%               0, &  \,\,\,i<l
%               \end{array}
%        \right.
%        \text{for some} \,\, h\in\ch\,\,
%\right\}.
%$$
$$
\bbw  =\left\{\bw\in \ch_3: \quad \bw_{ilj} = h_lU_{i-l,j}, \,\,\, \text{for some} \,\, h\in\ch\,\,\right\}.
$$

\bigskip\noindent
\begin{remark}
As a consequence of Convention~\ref{convention}, all $\bw \in \bbw$ have elements $\bw_{ilj}=h_lU_{i-l,j}=0$ for $i<l$..
\end{remark}

\begin{remark}
For any $\bw \in \ch_3$ let $W\in \mathbb R_+^{(N+1)\times m}$ be its marginal, with elements $W_{ij}:=\bw_{i\centerdot j}$. Note that
\begin{equation}\label{eq:W-marginal}
\bw \in \bbw \qquad \Longrightarrow \qquad W_{ij} = \sum_l h_{l}U_{i-l,j}.
\end{equation}
It follows that, if $\mathbf Y \in \bby \cap \bbw$, the data $(Y, U)$ can be described with a perfect model $Y=T(h)U$, since equation~(\ref{eq:W-marginal}) and the definition of $\bby$, imply that $Y_{ij} = \sum_l h_{l}U_{i-l,j}$.
\end{remark}

\bigskip
We consider below two divergence minimization problems in the ambient space $\ch_3$.
\begin{problem}\label{problem:miny}
Given $\bw\in\ch_3$, minimize the divergence $\ii(\by||\bw)$ over $\by\in\bby$.
\end{problem}
\begin{problem}\label{problem:minw}
Given $\by\in\ch_3$, minimize the divergence $\ii(\by||\bw)$ over $\bw\in\bbw$.
\end{problem}

Both problems have explicit solutions. Problem~\ref{problem:miny}, the first, has already been solved in \cite{fs2006}. For ease of reference, we adapt the result below.
\begin{proposition}\label{proposition:sol1}
The solution of Problem~\ref{problem:miny}, denoted $\by^*$ or $\by^*(\bw)$,  satisfies
\[
\by^*_{ilj}=\frac{Y_{ij}}{W_{ij}}\bw_{ilj},
\]
moreover
\begin{equation}\label{eq:ii}
\ii(\by^*(\bw)||\bw) = \ii(Y||W),
\end{equation}
which, if $\bw\in\bbw$, reads
\begin{equation}\label{eq:iih}
\ii(\by^*(\bw)||\bw) = \ii(Y||T(h)U).
\end{equation}
\end{proposition}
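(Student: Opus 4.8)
The plan is to exploit that membership in $\bby$ constrains only the middle ($l$-)marginal of a tensor, so that the minimization of $\ii(\by||\bw)$ decouples across the index pairs $(i,j)$. Writing $W_{ij}:=\bw_{i\centerdot j}$ and using $\sum_l\by_{ilj}=Y_{ij}$, valid for every $\by\in\bby$, I would first rewrite
\[
\ii(\by||\bw)=\sum_{i,j}\Bigl(\sum_l\by_{ilj}\log\frac{\by_{ilj}}{\bw_{ilj}}\Bigr)-\sum_{i,j}Y_{ij}+\sum_{i,j}W_{ij},
\]
so that the only $\by$-dependent part is $\sum_{i,j}\sum_l\by_{ilj}\log(\by_{ilj}/\bw_{ilj})$, and within it the $(i,j)$-blocks are uncoupled.

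The workhorse is the log-sum inequality: for nonnegative reals $(a_l)$ and $(b_l)$,
\[
\sum_l a_l\log\frac{a_l}{b_l}\ \ge\ \Bigl(\sum_l a_l\Bigr)\log\frac{\sum_l a_l}{\sum_l b_l},
\]
with equality if and only if $a_l$ is proportional to $b_l$. Applying it in each block with $a_l=\by_{ilj}$, $b_l=\bw_{ilj}$, so that $\sum_l a_l=Y_{ij}$ and $\sum_l b_l=W_{ij}$, and summing over $(i,j)$, yields $\ii(\by||\bw)\ge\ii(Y||W)$ for all $\by\in\bby$. It then remains to exhibit a feasible minimizer attaining this bound: the tensor $\by^*$ with $\by^*_{ilj}=(Y_{ij}/W_{ij})\bw_{ilj}$ has middle marginal $(Y_{ij}/W_{ij})W_{ij}=Y_{ij}$, hence $\by^*\in\bby$, and in each block it realizes the equality case of the log-sum inequality; thus $\by^*$ is the minimizer and $\ii(\by^*(\bw)||\bw)=\ii(Y||W)$, which is~\eqref{eq:ii}. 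Equation~\eqref{eq:iih} follows at once: for $\bw\in\bbw$ one has $\bw_{ilj}=h_lU_{i-l,j}$ for some $h\in\ch$, so $W_{ij}=\sum_l h_lU_{i-l,j}=(T(h)U)_{ij}$ by~\eqref{eq:thuij}, and substitution in~\eqref{eq:ii} gives the claim.

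The one point requiring care --- and the part I expect to be the only genuine obstacle --- is the interplay with the absolute-continuity conventions when $W_{ij}=0$. If $W_{ij}=0$ and $Y_{ij}=0$, the entire $(i,j)$-block of $\ii(\by||\bw)$ vanishes and $\by^*_{ilj}=0$ for all $l$, consistent with the convention $0/0=0$. If $W_{ij}=0$ but $Y_{ij}>0$, then $Y\not\ll W$, so $\ii(Y||W)=+\infty$; on the other hand every $\by\in\bby$ has $\sum_l\by_{ilj}=Y_{ij}>0$ while $\bw_{ilj}=0$ for all $l$, forcing $\by\not\ll\bw$ and $\ii(\by||\bw)=+\infty$ as well, so the asserted identity again holds. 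Apart from this bookkeeping (and the standing conventions $0\log 0=0$ that dispose of individual indices $l$ with $\bw_{ilj}=0$ but $W_{ij}>0$), the argument is routine, and attainment of the minimum is immediate since an explicit optimizer is produced rather than invoked via compactness.
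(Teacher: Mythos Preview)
Your proof is correct. The paper does not actually prove this proposition in the text; it simply states that the result ``has already been solved in~\cite{fs2006}'' and quotes the outcome. Your argument via the log-sum inequality is the standard one and is essentially equivalent to (indeed, a repackaging of) the Pythagorean identity~\eqref{eq:pythy} that the paper establishes later in Lemma~\ref{lemma:pyth}: the identity $\ii(\by||\bw)=\ii(\by||\by^*(\bw))+\ii(\by^*(\bw)||\bw)$, once verified by direct computation, immediately yields both the minimizer and the value of the minimum, since the first term on the right is nonnegative and vanishes exactly at $\by=\by^*(\bw)$. Your log-sum route and the Pythagorean route differ only in emphasis---you bound first and then exhibit the equality case, whereas the identity approach computes the gap $\ii(\by||\bw)-\ii(\by^*(\bw)||\bw)$ exactly---but the underlying computation is the same. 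Your handling of the degenerate cases $W_{ij}=0$ is careful and correct.
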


The solution of Problem~\ref{problem:minw}, the second, is detailed in the next proposition. Here and elsewhere in the paper we use the notation $$
\alpha_k=\sum_{l=0}^kU_{l\centerdot}, \qquad k=0,\ldots,N.
$$
\begin{proposition}\label{proposition:hstar}
Assume that $U_{0\centerdot}>0$. The solution of Problem~\ref{problem:minw}, denoted $\bw^*$ or $\bw^*(\by)$,  satisfies
\[
\bw^*_{ilj}=h^*_{l}U_{i-l,j}, \quad \text{where} \quad h^*_l=\frac{\by_{\centerdot l\centerdot}}{\alpha_{N-l}},
\]
moreover, if $\by\in\bby$, the vector $h^* \in \mathcal{S}:=\{h\in \ch: \sum_{k=0}^N h_k\alpha_{N-k}=\sum_{ij}Y_{ij}\}$.
\end{proposition}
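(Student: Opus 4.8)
The plan is to treat Problem~\ref{problem:minw} as an unconstrained minimization over the parameter $h\in\ch$, since every $\bw\in\bbw$ is of the form $\bw_{ilj}=h_lU_{i-l,j}$ and the map $h\mapsto\bw$ is injective on the coordinates that matter. First I would write out $\ii(\by\|\bw)$ explicitly using the definition~\eqref{def:Idiv}, substituting $\bw_{ilj}=h_lU_{i-l,j}$: the terms $\by_{ilj}\log\by_{ilj}$ do not depend on $h$, so up to an $h$-independent constant the objective is
\[
G(h)=\sum_{ilj}\bigl(-\by_{ilj}\log(h_lU_{i-l,j})+h_lU_{i-l,j}\bigr).
\]
Splitting the logarithm and discarding the $h$-free piece $\sum_{ilj}\by_{ilj}\log U_{i-l,j}$ (finite on the relevant domain by the absolute-continuity requirement implicit in $\ii<\infty$), the objective reduces to $\sum_l\bigl(-\by_{\centerdot l\centerdot}\log h_l+h_l\sum_{i,j}U_{i-l,j}\bigr)$. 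Now $\sum_{i,j}U_{i-l,j}=\sum_{i\ge l}U_{(i-l)\centerdot}=\sum_{r=0}^{N-l}U_{r\centerdot}=\alpha_{N-l}$ by Convention~\ref{convention} and the definition of $\alpha_k$. So the problem separates across the index $l$ into $N+1$ scalar problems: minimize $g_l(h_l)=-\by_{\centerdot l\centerdot}\log h_l+\alpha_{N-l}h_l$ over $h_l\ge0$.

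Next I would solve each scalar problem. The assumption $U_{0\centerdot}>0$ guarantees $\alpha_{N-l}\ge\alpha_0=U_{0\centerdot}>0$ for every $l$, so $g_l$ is strictly convex on $(0,\infty)$ with $g_l(h_l)\to+\infty$ as $h_l\to\infty$, and $g_l'(h_l)=-\by_{\centerdot l\centerdot}/h_l+\alpha_{N-l}=0$ gives the unique minimizer $h_l^*=\by_{\centerdot l\centerdot}/\alpha_{N-l}$ (when $\by_{\centerdot l\centerdot}=0$ the minimizer is $h_l^*=0$, consistent with the same formula under the convention $0/\alpha=0$, and one should note $\by_{\centerdot l\centerdot}=0$ forces $\bw_{ilj}=0$ there so the divergence contribution vanishes). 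This yields the claimed $\bw^*_{ilj}=h^*_lU_{i-l,j}$ with $h^*_l=\by_{\centerdot l\centerdot}/\alpha_{N-l}$.

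Finally, for the membership statement, assume $\by\in\bby$, so $\by_{i\centerdot j}=Y_{ij}$. Then $\sum_{k=0}^N h^*_k\alpha_{N-k}=\sum_{k=0}^N\by_{\centerdot k\centerdot}=\sum_{ikj}\by_{ikj}=\sum_{ij}\by_{i\centerdot j}=\sum_{ij}Y_{ij}$, which is exactly the defining relation of $\mathcal S$; together with $h^*\ge0$ this gives $h^*\in\mathcal S$. I do not expect any serious obstacle here: the only delicate points are bookkeeping with Convention~\ref{convention} in the identity $\sum_{i,j}U_{i-l,j}=\alpha_{N-l}$, and handling the degenerate case $\by_{\centerdot l\centerdot}=0$ so that the stated formula and the value $h_l^*=0$ agree; the separability of $G(h)$ across $l$ is what makes everything transparent.
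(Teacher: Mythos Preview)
Your proposal is correct and follows essentially the same route as the paper: reduce to minimization over $h$, discard $h$-independent terms to arrive at $-\sum_l\by_{\centerdot l\centerdot}\log h_l+\sum_l h_l\alpha_{N-l}$, then read off $h^*_l=\by_{\centerdot l\centerdot}/\alpha_{N-l}$ and verify $h^*\in\mathcal S$ by summing. You have simply made explicit what the paper calls ``trivial manipulations,'' including the use of $U_{0\centerdot}>0$ to ensure $\alpha_{N-l}>0$ and the handling of the degenerate case $\by_{\centerdot l\centerdot}=0$.
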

\begin{proof}
Since $\bw\in\bbw$, we in fact optimize over $h\in\ch$.
Trivial manipulations of the objective function reduce the problem to the explicit minimization of
\[
-\sum_{l=0}^N\by_{\centerdot l\centerdot}\log h_{l}+\sum_{l=0}^Nh_{l}\alpha_{N-l},
\]
which is attained at $h^*$. Finally, if $\by \in \bby$, checking that $h^*\in\mathcal{S}$ is immediate,
$$
\sum_{k=0}^N h^*_k\alpha_{N-k}= \by_{\centerdot \centerdot \centerdot}=\sum_{ij}Y_{ij}.
$$
\end{proof}
%We rewrite the above formula by change of indices as
%\[
%%-\sum_{l=0}^N\sum_{k=0}^{N-l}\by_{k+l,l,\centerdot}\log h_k+\sum_{l=0}^N\sum_{k=0}^{N-l}h_ku_{l\centerdot}=
%-\sum_{k=0}^N(\sum_{l=0}^{N-k}\by_{k+l,l,\centerdot})\log h_k+\sum_{k=0}^N(\sum_{l=0}^{N-k}U_{l\centerdot})h_k.
%\]
%Differentiation of this expression w.r.t.~$h_l$ yields
%\[
%-\frac{\sum_{l=0}^{N-k}\by_{k+l,l,\centerdot}}{h_k}+\sum_{l=0}^{N-k}U_{l\centerdot},
%\]
%which is zero for
%\[
%h_k^*=\frac{\sum_{l=0}^{N-k}\by_{k+l,l,\centerdot}}{\sum_{l=0}^{N-k}U_{l\centerdot}}.
%\]
%One easily checks that indeed a minimum is attained at $h^*=(h^*_k)$. To show that $h^*\in\mathcal{S}$, it suffices to compute
%\begin{align*}
%\sum_{k=0}^N\sum_{l=0}^{N-k}\by_{k+l,l,\centerdot} & = \sum_{l=0}^N\sum_{k=0}^{N-l} \by_{k+l,l,\centerdot} \\
%& = \sum_{l=0}^N\sum_{j=k}^N \by_{jl\centerdot} \\
%& = \sum_{l=0}^N\sum_{j=0}^N \by_{jl\centerdot} \\
%& = \sum_{ilj}\by_{ilj},
%\end{align*}
%where we used that $\by_{jl\centerdot}=0$ for $j<k$. Finally, $\sum_{ilj}\by_{ilj}=\sum_{ij}Y_{ij}$ if $\by\in\bby$.

%\begin{remark}
%In the `ideal case', i.e.\ \eqref{eq:YU} holds, the $h_l^*$ coincide with the $h_l$ of $h$ in \eqref{eq:YU}. TRUE???????
%\end{remark}

\bigskip
Now we can make the connection between the original minimization Problem~\ref{problem:minh} and the two partial minimization Problems~\ref{problem:miny} and~\ref{problem:minw}.
\begin{proposition} \label{MIN=min}
It holds that
\[
\min_{\by\in\bby}\,\min_{\bw\in\bbw}\,\ii(\by||\bw)=\min_{h\in \ch}\, \ii(Y||T(h)U),
\]
moreover, if $h^*$ is the minimizer on the right and $\bw^*$ its correspondent in $\bbw$,
\[
\ii(\by^*(\bw^*)||\bw^*)= \ii(Y||T(h^*)U).
\]
\end{proposition}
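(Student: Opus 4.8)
The plan is to unwind both sides of the claimed identity using the two explicit partial-minimization results, Proposition~\ref{proposition:sol1} and Proposition~\ref{proposition:hstar}, and to show that the iterated (double) minimization on the left can be reorganized into the single minimization on the right. I would first fix an arbitrary $\by\in\bby$ and carry out the inner minimization over $\bw\in\bbw$. By Proposition~\ref{proposition:hstar} (whose hypothesis $U_{0\centerdot}>0$ is guaranteed by Condition~\ref{cond:sc}), this inner minimum is attained at $\bw^*(\by)$ with $h^*_l = \by_{\centerdot l\centerdot}/\alpha_{N-l}$. The point is that the resulting value $\min_{\bw\in\bbw}\ii(\by||\bw)$ is a function of $\by$ only through the numbers $\by_{\centerdot l\centerdot}$, and in fact only through the corresponding $h^*$; one should record the value of $\ii(\by\|\bw^*(\by))$ in closed form.

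Next I would perform the outer minimization over $\by\in\bby$. Since $\by\in\bby$ constrains only the marginals $\by_{i\centerdot j}=Y_{ij}$, and the inner value depends on $\by$ only through the aggregated quantities $\by_{\centerdot l\centerdot}$, the outer minimization reduces to choosing those aggregates optimally. The natural route is instead to go the other way: for \emph{any} $h\in\ch$ take $\bw\in\bbw$ with $\bw_{ilj}=h_lU_{i-l,j}$, and apply Proposition~\ref{proposition:sol1} to compute $\min_{\by\in\bby}\ii(\by||\bw) = \ii(Y||W) = \ii(Y||T(h)U)$, using \eqref{eq:ii}–\eqref{eq:iih} and \eqref{eq:W-marginal}. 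Taking the infimum over $h\in\ch$ gives
\[
\min_{\bw\in\bbw}\min_{\by\in\bby}\ii(\by||\bw) = \min_{h\in\ch}\ii(Y||T(h)U).
\]
To conclude I invoke the fact that for a jointly defined quantity the two orders of iterated minimization coincide, $\min_{\by}\min_{\bw} = \min_{\bw}\min_{\by}$ (both equal $\inf$ over the product set $\bby\times\bbw$), which yields the displayed equality of the proposition.

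For the ``moreover'' part, let $h^*$ be the minimizer on the right (it exists and is unique by Proposition~\ref{proposition:minh}), and let $\bw^*\in\bbw$ be its image, $\bw^*_{ilj}=h^*_lU_{i-l,j}$. Then $\min_{\by\in\bby}\ii(\by||\bw^*) = \ii(Y||T(h^*)U)$ by Proposition~\ref{proposition:sol1}, and this inner minimum is attained at $\by^*(\bw^*)$ with $\by^*_{ilj}=\frac{Y_{ij}}{W_{ij}}\bw^*_{ilj}$; since the pair $(\by^*(\bw^*),\bw^*)$ realizes the overall minimum on the left, $\ii(\by^*(\bw^*)||\bw^*) = \ii(Y||T(h^*)U)$, as claimed. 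I should also note the edge case where $\min_h F(h)=+\infty$ is excluded by well-posedness (Condition~\ref{cond:sc} implies Condition~\ref{cond:wp}), so all infima are finite and attained.

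The main obstacle, such as it is, is the bookkeeping in swapping the two minimizations and checking that no feasibility is lost: one must verify that every $\bw\in\bbw$ arises from some $h\in\ch$ and conversely, and that the marginal relation \eqref{eq:W-marginal} is exactly what links $W$ to $T(h)U$; everything else is a direct substitution of the two already-proved closed-form solutions. There is no analytic difficulty — the content is entirely in the lifting construction, which has been set up precisely so that this identity falls out.
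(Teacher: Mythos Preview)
Your proposal is correct and follows essentially the same route as the paper: both arguments hinge on Proposition~\ref{proposition:sol1}, specifically \eqref{eq:iih}, which identifies $\min_{\by\in\bby}\ii(\by||\bw)$ with $\ii(Y||T(h)U)$ whenever $\bw\in\bbw$ corresponds to $h$. The only cosmetic difference is that you invoke the swap $\min_{\by}\min_{\bw}=\min_{\bw}\min_{\by}$ as a general principle, while the paper writes out the two inequalities explicitly; your first paragraph on Proposition~\ref{proposition:hstar} is a detour you yourself abandon and is not needed.
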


\begin{proof}
Fix $\by\in\bby$ and $\bw\in\bbw$, and let $\by^*=\by^*(\bw)$ be the solution of Problem~\ref{problem:miny} with $\bw$ as input. From equation~\eqref{eq:iih}, one has
\begin{align*}
\ii(\by|\bw) & \geq \ii(\by^*(\bw)||\bw) \\
& = \ii(Y||T(h)U) \\
& \geq \inf_{h\in\ch}\ii(Y||T(h)U).
\end{align*}
It follows that
$$
\min_{\by\in\bby}\,\min_{\bw\in\bbw}\,\ii(\by||\bw) \geq \inf_{h\in \ch}\, \ii(Y||T(h)U).
$$
Conversely, fix $h\in \ch$ and let $\bw$ be the corresponding element in $\bbw$, i.e. with $\bw_{ilj}=h_l U_{i-l,j}$ then, again from equation~\eqref{eq:iih},
\begin{align*}
\ii(Y|T(h)U) & = \ii(\by^*(\bw)||\bw) \\
& \geq \min_{\by\in\bby}\,\min_{\bw\in\bbw}\, \ii(\by||\bw),
\end{align*}
which yields
$$
\inf_{h\in \ch}\, \ii(Y||T(h)U)\geq \min_{\by\in\bby}\,\min_{\bw\in\bbw}\, \ii(\by||\bw).
$$
Next we check the value of the minimum. Proposition~\ref{proposition:minh} guarantees the existence of a minimizer of the right hand side, call it $h^*\in\ch$, and let $\bw^*$ be the corresponding element of $\bbw$. Then, using \eqref{eq:iih} once more, one gets $\ii(Y||T(h^*)U)=\ii(\by^*(\bw^*)||\bw^*)$, which shows that $(\by^*(\bw^*),\bw^*)$ is a  minimizing pair.
\end{proof}

\bigskip
The solutions of the two partial minimization problems share the essential {\em Pythagorean property} (see e.g. \cite{csiszarshields} and \cite{c1975}) which, in the present context, is derived below.
\begin{lemma}\label{lemma:pyth}
In Problem~\ref{problem:miny}, with $\bw$ fixed, for all $\by\in\bby$,
\begin{equation}\label{eq:pythy}
\ii(\by||\bw)=\ii(\by||\by^*(\bw))+\ii(\by^*(\bw)||\bw).
\end{equation}
In Problem~\ref{problem:minw}, with $\by$ fixed, for all $\bw\in\bbw$,
\begin{equation}\label{eq:pythw}
\ii(\by||\bw)=\ii(\by||\bw^*(\by))+\ii(\bw^*(\by)||\bw).
\end{equation}
\end{lemma}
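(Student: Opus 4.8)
The plan is to prove both identities by direct algebraic manipulation of the I-divergence, exploiting the explicit forms of the solutions $\by^*(\bw)$ and $\bw^*(\by)$ obtained in Propositions~\ref{proposition:sol1} and~\ref{proposition:hstar}. The general principle behind Pythagorean relations of this type is that the divergence to the "projection" plus the divergence from the projection telescopes back to the total divergence, because of cancellations among the linear ($-M+N$) terms and a clean splitting of the logarithmic terms. I would treat the two cases separately, since the constraint sets $\bby$ and $\bbw$ are of different nature (fixed marginals in one case, a low-rank/convolutional structure in the other).

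For the first identity~\eqref{eq:pythy}, I would start from the right-hand side, write out $\ii(\by||\by^*(\bw))$ and $\ii(\by^*(\bw)||\bw)$ using~\eqref{def:Idiv}, and substitute $\by^*_{ilj} = (Y_{ij}/W_{ij})\,\bw_{ilj}$. The key observation is that $\by \in \bby$ means $\by_{i\centerdot j} = Y_{ij}$, and also $\by^*(\bw)_{i\centerdot j} = (Y_{ij}/W_{ij}) W_{ij} = Y_{ij}$, so $\by$ and $\by^*(\bw)$ have the same marginals over the middle index. Hence in the sum defining $\ii(\by||\by^*(\bw)) + \ii(\by^*(\bw)||\bw)$, the logarithmic term $\by_{ilj}\log(\by_{ilj}/\by^*_{ilj})$ from the first divergence combines with $\by^*_{ilj}\log(\by^*_{ilj}/\bw_{ilj})$ from the second; writing $\log(\by^*_{ilj}/\bw_{ilj}) = \log(Y_{ij}/W_{ij})$, the second contributes $\sum_{ilj}\by^*_{ilj}\log(Y_{ij}/W_{ij}) = \sum_{ij} Y_{ij}\log(Y_{ij}/W_{ij})$, which is a function of the marginals only, and the $\by^*_{ilj}$ prefactor can be replaced by $\by_{ilj}$ in the log because the log factor depends only on $(i,j)$. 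Collecting terms one should recover exactly $\sum_{ilj}\by_{ilj}\log(\by_{ilj}/\bw_{ilj})$; meanwhile the linear terms $-\by_{ilj}+\by^*_{ilj}$ and $-\by^*_{ilj}+\bw_{ilj}$ telescope (again using equal marginals so that $\sum \by^*_{ilj} = \sum \by_{ilj}$) to $-\by_{ilj}+\bw_{ilj}$, giving $\ii(\by||\bw)$.

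For the second identity~\eqref{eq:pythw}, the analogous fact is that, with $\by$ fixed, any $\bw \in \bbw$ has the form $\bw_{ilj} = h_l U_{i-l,j}$, and $\bw^*_{ilj} = h^*_l U_{i-l,j}$ with $h^*_l = \by_{\centerdot l \centerdot}/\alpha_{N-l}$. Thus $\bw$ and $\bw^*$ differ only through the scalar sequences $h_l$ and $h^*_l$, and $\log(\bw^*_{ilj}/\bw_{ilj}) = \log(h^*_l/h_l)$ depends only on $l$. The crucial bookkeeping identity is $\sum_{ij}\bw^*_{ilj} = h^*_l \sum_{ij} U_{i-l,j} = h^*_l \alpha_{N-l} = \by_{\centerdot l \centerdot}$, i.e.\ $\bw^*$ matches $\by$ on the marginal over $l$. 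Starting from the right-hand side of~\eqref{eq:pythw} and expanding, the term $\sum_{ilj}\bw^*_{ilj}\log(h^*_l/h_l) = \sum_l \by_{\centerdot l \centerdot}\log(h^*_l/h_l)$ combines with $\sum_{ilj}\by_{ilj}\log(\by_{ilj}/\bw^*_{ilj})$; since the extra log factor depends only on $l$ one may reattach it to $\by_{ilj}$ and rebuild $\sum_{ilj}\by_{ilj}\log(\by_{ilj}/\bw_{ilj})$. The linear terms again telescope using $\sum_{ilj}\bw^*_{ilj} = \sum_l \by_{\centerdot l\centerdot} = \by_{\centerdot\centerdot\centerdot} = \sum_{ilj}\by_{ilj}$.

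**Main obstacle.** The genuinely delicate point is not the algebra but the handling of degenerate entries, namely indices $(i,l,j)$ where $\bw_{ilj}=0$ (equivalently $h_l=0$ or $U_{i-l,j}=0$) or where $W_{ij}=0$. One must check that the conventions $0\log 0 = 0$, $0/0 = 0$ make every term well-defined and that absolute continuity propagates correctly: $\by \ll \by^*(\bw) \ll \bw$ in the first case (so that if any term is $+\infty$ all three divergences are, consistently), and $\by \ll \bw^*(\by)$ together with $\bw^*(\by) \ll \bw$ on the relevant support in the second. I would dispose of the $\ii(\by||\bw)=+\infty$ case first (both sides are then $+\infty$, by monotonicity of absolute continuity along the chain), and then carry out the computation above on the finite-divergence case, where all denominators appearing are strictly positive and every cancellation is legitimate.
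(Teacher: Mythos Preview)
Your proposal is correct and follows essentially the same approach as the paper: both arguments hinge on the two key identities you isolate, namely that $\log(\by^*_{ilj}/\bw_{ilj})=\log(Y_{ij}/W_{ij})$ depends only on $(i,j)$ while $\by$ and $\by^*(\bw)$ share $(i,j)$-marginals $Y_{ij}$ (first relation), and that $\log(\bw^*_{ilj}/\bw_{ilj})=\log(h^*_l/h_l)$ depends only on $l$ while $\sum_{ij}\bw^*_{ilj}=h^*_l\alpha_{N-l}=\by_{\centerdot l\centerdot}$ (second relation). The paper organizes the second computation as showing $\ii(\by||\bw)-\ii(\by||\bw^*)=\ii(\bw^*||\bw)$ whereas you build the sum $\ii(\by||\bw^*)+\ii(\bw^*||\bw)$ and match it to $\ii(\by||\bw)$, but this is the same algebra; your discussion of degenerate entries is more explicit than the paper's, which leaves that point tacit.
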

\begin{proof}
Equation~\eqref{eq:pythy} follows by a straightforward computation. We proceed to the proof of equation~\eqref{eq:pythw}. We first compute
\begin{IEEEeqnarray}{rCl}
& & \ii(\by||\bw)-\ii(\by||\bw^*(\by)) \nonumber \\
&=& \sum_{ilj}\by_{ilj}\log\frac{\bw^*_{ilj}}{\bw_{ilj}} \nonumber\\
&& + \: \sum_{ilj}\bw_{ilj}-\sum_{ilj}\bw^*_{ilj}\nonumber\\
&=& \sum_{l} \by_{\centerdot l\centerdot}\log \frac{h^*_{l}}{h_{l}}+ \sum_{ilj}\big(\bw_{ilj}-\bw^*_{ilj}\big).\label{eq:iyww}
\end{IEEEeqnarray}
Next we compute
\begin{IEEEeqnarray}{rCl}
& & \ii(\bw^*(\by)||\bw) \nonumber \\
&=& \sum_{ilj}\bw^*_{ilj}\log\frac{\bw^*_{ilj}}{\bw_{ilj}} + \sum_{ilj}\bw_{ilj}-\sum_{ilj}\bw^*_{ilj} \nonumber\\
&=& \sum_{il}\frac{Y_{\centerdot l\centerdot}}{\alpha_{N-l}}U_{i-l,\centerdot}\log\frac{h^*_{l}}{h_{l}} + \sum_{ilj}\big(\bw_{ilj}-\bw^*_{ilj}\big) \nonumber \\
%& = \sum_{i\geq l}h^*_{l}U_{i-l,\centerdot}\log\frac{h^*_{l}}{h_{l}}+ \sum_{ilj}\big(\bw_{ilj}-\bw^*_{ilj}\big) \\
%& = \sum_l\sum_{i\geq l}\frac{Y_{\centerdot l\centerdot}}{\alpha_{N-l}}U_{i-l,\centerdot}\log\frac{h^*_{l}}{h_{l}} + \sum_{ilj}\big(\bw_{ilj}-\bw^*_{ilj}\big)\\
&=& \sum_lY_{\centerdot l\centerdot}\log\frac{h^*_{l}}{h_{l}}+ \sum_{ilj}\big(\bw_{ilj}-\bw^*_{ilj}\big),
%\\
%& = \sum_{i=0}^N\sum_{l=0}^ih^*_{i-l}U_{l\centerdot}\log\frac{h^*_{i-l}}{h_{i-l}}+ \sum_{ilj}\bw_{ilj}-\sum_{ilj}\bw^*_{ilj} \\
%& = \sum_{i=0}^N\sum_{k=0}^ih^*_{k}U_{i-k,\centerdot}\log\frac{h^*_{k}}{h_{k}}+ \sum_{ilj}\bw_{ilj}-\sum_{ilj}\bw^*_{ilj} \\
%& = \sum_{i=0}^N\sum_{k=0}^i\frac{\sum_{l=0}^{N-k}\by_{k+l,l,\centerdot}}{\sum_{l=0}^{N-k}U_{l\centerdot}}U_{i-k,\centerdot}\log\frac{h^*_{k}}{h_{k}}+ \sum_{ilj}\bw_{ilj}-\sum_{ilj}\bw^*_{ilj} \\
%& = \sum_{k=0}^N\frac{\sum_{l=0}^{N-k}\by_{k+l,l,\centerdot}}{\sum_{l=0}^{N-k}U_{l\centerdot}}\sum_{i=k}^NU_{i-k,\centerdot}\log\frac{h^*_{k}}{h_{k}}+ \sum_{ilj}\bw_{ilj}-\sum_{ilj}\bw^*_{ilj} \\
%& = \sum_{k=0}^N\sum_{l=0}^{N-k}\by_{k+l,l,\centerdot}\log\frac{h^*_{k}}{h_{k}}+ \sum_{ilj}\bw_{ilj}-\sum_{ilj}\bw^*_{ilj} \\
%& = \sum_{i\geq l}\by_{il\centerdot}\log\frac{h^*_{i-l}}{h_{i-l}}+ \sum_{ilj}\bw_{ilj}-\sum_{ilj}\bw^*_{ilj},
\end{IEEEeqnarray}
which coincides with \eqref{eq:iyww}.
\end{proof}

\section{Algorithm}\label{section:algorithm}
\setcounter{equation}{0}

We propose here an iterative algorithm for the solution of the minimization Problem~\ref{problem:minh}.
The algorithm is of the classic alternating minimization type, and is derived using the results of the previous section.
Abstractly, one starts at an initial $\bw^0\in \bbw$, and implements the alternating minimization scheme
$$
\dots \, \bw^t\stackrel{1}{\longrightarrow} \by^t\stackrel{2}{\longrightarrow}  \bw^{t+1}\stackrel{1}{\longrightarrow}  \by^{t+1} \, \dots,
$$
where the superscript $t$ denotes the value at the $t$-th iteration. The arrow $\stackrel{1}{\longrightarrow}$ denotes an instance of the first partial minimization, Problem~\ref{problem:miny}, the matrix at the tail of the arrow is the given input, and the matrix at the head is the optimal solution. For instance $\bw^t\stackrel{1}{\longrightarrow} \by^t$ means that $\by^t=\by^*(\bw^t)$. The meaning of $\stackrel{2}{\longrightarrow}$ is analogous, and represents an instance of the second partial minimization, Problem~\ref{problem:minw}. For instance $\by^t\stackrel{2}{\longrightarrow}  \bw^{t+1}$ means that $\bw^{t+1} = \bw^*(\by^t)$. The hope is that the alternating minimizations produce a sequence of iterates $(\bw^t, \by^t)$ converging to the pair $(\bw^*,\by^*(\bw^*))$ of Proposition~\ref{MIN=min}, thus solving Problem~\ref{problem:minh}. This is indeed the case, as proved in Section~\ref{section:asymptotics}. Here we concentrate on producing a computational version of the algorithm sketched above in abstract terms.

\medskip
Note that, at each iteration, $\bw^t$ is completely specified by the fixed data $U$ and by the vector $h^t \in\ch$. Computationally it is more efficient to work only with the vectors $h^t \in \ch$, one therefore has to shunt the $\by^t$ steps of the alternating minimization, and move directly from $\bw^t$ to $\bw^{t+1}$. This leads to the following scheme.
For given $h^t\in \ch$, define the corresponding $\bw^t_{ilj}=h^t_{l}U_{i-l,j}$ and use it as input in the first partial minimization. The solution, computed according to Proposition~\ref{proposition:sol1}, is
%\begin{equation}\label{eq:byt}
%\by^t_{ilj}=\left\{
%\begin{array}{ll}
%Y_{ij}\frac{h^t_{l}U_{i-l,j}}{\sum_{p=0}^ih^t_{p}U_{i-p,j}} & \mbox{if } i\geq l \\
%0 & \mbox{if } i<l.
%\end{array}
%\right.
%\end{equation}
\begin{equation}\label{eq:byt}
\by^t_{ilj}=Y_{ij}\frac{h^t_{l}U_{i-l,j}}{\sum_{p=0}^ih^t_{p}U_{i-p,j}}.
\end{equation}
Use now $\by^t_{ilj}$ as input in the second partial minimization. The solution, computed according to Proposition~\ref{proposition:hstar}, is
\begin{equation}\label{eq:ht}
h^{t+1}_k  =\frac{\by^t_{\centerdot  k\centerdot}}{\sum_{l=0}^{N-k}U_{l\centerdot}},
\end{equation}
with
\begin{equation} \label{eq:htdet}
\by^t_{\centerdot k\centerdot}=\sum_{i= k}^N\sum_{j=1}^m\frac{Y_{ij}U_{i-k,j}}{\sum_ph^t_{p}U_{i-p,j}}h^t_k.
\end{equation}
To shunt the $\by^t$ step it is enough to combine equations \eqref{eq:byt}, \eqref{eq:ht}, and \eqref{eq:htdet} to obtain the following iterative algorithm, solely in terms of $h^t$ vectors and original data $(U,Y)$.
\begin{algorithm}\label{algorithm:h}
$$
h^{t+1}=I(h^t),
$$
where the map $I$ acts on the components of $h^t$ as follows
\begin{equation}\label{eq:rech}
h^{t+1}_k = I_k(h^t):=\frac{h^t_k}{\sum_{l=0}^{N-k}U_{l\centerdot}}\sum_{j=1}^m\sum_{i=k}^{N}\frac{Y_{ij}U_{i-k,j}}{\sum_ph^t_{p}U_{i-p,j}}.
\end{equation}
For further reference it is convenient to introduce the functions $G_k$ defined implicitly as (see equation~\eqref{eq:rech})
\[
I_k(h^t):=h^t_k G_k(h^t).
\]
The initial point $h^0\in \ch$ is chosen such that $F(h^0)<\infty$. If the data satisfy $U_{0\centerdot}>0$, a sufficient condition for $F(h^0)<\infty$ is $h^0>0$ componentwise.
\end{algorithm}

\begin{remark} \label{G-regularity}
Note that, under the assumption $U_{0\centerdot}>0$, the functions $G_k(h)$ are continuous at all points $h$ such that $Y \ll T(h)U$.
\end{remark}

\noindent
\emph{Properties of Algorithm~\ref{algorithm:h}}

\begin{enumerate}

\item
The algorithm decreases the divergence $\ii(Y||T(h^t)U)$ at each step. Indeed, by construction and Propositions~\ref{proposition:sol1} and~\ref{proposition:hstar}, we have
\begin{align}
\ii(Y|T(h^{t+1})U) & =\ii(\by^{t+1}||\bw^{t+1}) \nonumber\\
& \leq\ii(\by^t||\bw^{t+1}) \nonumber\\
& \leq\ii(\by^t||\bw^t)=\ii(Y|T(h^t)U).\label{eq:decreasingdiv}
\end{align}
In Proposition~\ref{proposition:gain} we will exactly quantify the decrease.

\item
If for some $t$ the vector $h^t$ is a perfect model, i.e. $Y=T(h^t)U$, then
\begin{IEEEeqnarray*}{rCl}
\sum_{j=1}^m\sum_{i=k}^N\frac{Y_{ij}U_{i-k,j}}{\sum_p h^t_p U_{i-p,j}}
&=& \sum_{j=1}^m\sum_{i=k}^{N}\frac{\sum_ph^t_{p}U_{i-p,j}U_{i-k,j}}{\sum_ph^t_{p}U_{i-p,j}} \\
&=& \sum_{l=0}^{N-k}U_{l\centerdot},
\end{IEEEeqnarray*}
hence, from equation~\eqref{eq:rech}, $h^{t+1}=h^t$, i.e. perfect models are fixed points of the algorithm.

\item
If for some $t$ the gradient $\nabla F(h^t)=0$, i.e. $h^t$ is a stationary point of $F(h)$,  then, using equation~\eqref{eq:gradient} to rewrite the recursion~\eqref{eq:rech},
\begin{equation}\label{eq:hf}
h^{t+1}_k =h^t_k\left(1-\frac{\nabla F(h^t)_k}{\sum_{l=0}^{N-k}U_{l\centerdot}}\right) = h^t,
\end{equation}
i.e.\ stationary points of $F(h)$ are fixed points of the algorithm. Moreover, we recognize a stability property of the recursion. If $h^t$ is such that $F$ is increasing (decreasing) in the $k$-th coordinate of $h^t$, then $h^{t+1}_k<h^t_k$ ($h^{t+1}_k>h^t_k$).

\item\label{item:simplex}
The vectors $h^t$ belong to the simplex $\mathcal{S}$, as it follows from Proposition~\ref{proposition:hstar}

\item
Assume the condition of Lemma~\ref{lemma:fconvex}. If a starting value $h^0_k>0$, then $h^t_k>0$ for all $t>0$.

\item
We omit the details of the following trivial consistency check. If $N=0$, the solution of Problem~\ref{problem:minh} is $h^*=h^*_0=\frac{Y_{0\centerdot}}{U_{0\centerdot}}$, the algorithm produces $h^1=h^*$, and stays there.

\end{enumerate}

We are now in the position to prove Proposition~\ref{proposition:minh}.
\begin{proof}{\bf of Proposition~\ref{proposition:minh}} We assume that the condition of Lemma~\ref{lemma:fconvex} is satisfied, one can take $h=\one$, i.e.\ $h_k\equiv 1$ to have all elements of $T(\one)U$ positive and hence, for this choice of $h$, the $I$-divergence $F(\one)=\ii(Y||T(\one)U)$ is finite. Take then $h^0=\one$ as a starting value of Algorithm~\ref{algorithm:h}, which at the first step produces $h^1$ with $F(h^1)\leq F(h^0)$ according to Equation~\eqref{eq:decreasingdiv}. Moreover, since $h^1$ is (partly) computed according to the second minimization problem, we have in view of Proposition~\ref{proposition:hstar} that $h^1\in\mathcal{S}$, a compact set. Hence we can confine our search for a minimum of $F$ to $\mathcal{S}$.

The functions $d_{ij}:x\to Y_{ij}\log\frac{Y_{ij}}{x}-Y_{ij}+x$ (for $x\geq 0$) have a minimum at $x=Y_{ij}$, also if $Y_{ij}=0$. Choose a sufficiently small positive $\eps<\min \{Y_{ij}: Y_{ij}>0\}$. Then a minimizer of $F$ has to belong to $\cf=\{h\in\ch: (T(h)U)_{ij}\geq \eps, \mbox{ for all } i,j \mbox{ such that } Y_{ij}>0\}$, and thus finding a minimizer of $F$ can be confined to the  compact set $\mathcal{S}\cap\cf$. We next show that this set is nonempty, for a judiciously chosen $\eps>0$. Let $\lambda>0$ and consider $\lambda\one$. Since $U_{0\centerdot}>0$, we can choose $\lambda$ such that $\lambda\sum_{k=0}^N=S$, hence for this $\lambda$ we have $\lambda\one\in\mathcal{S}$. Redefine, if necessary,  $\eps>0$ such that also $\eps<\min_j (T(\lambda\one)U)_{0j}$, then $\lambda\one\in \cf$, showing that $\mathcal{S}\cap\cf$ is non-void.

%$\lambda\geq\sum_{i=0}^NU_{ij}(T(\lambda\one)U)_{ij\geq \lambda U_{0j}>0$, which shows that for varying $\lambda$ one can get any value in $(0,\infty)$ for $(T(\lambda\one)U)_{ij}$.

Since $F$ is continuous on this set, a minimizer indeed exists. Moreover, since  $F$ is strictly convex, it has a unique minimizer, once there exists one.
\end{proof}
Next we quantify the update gain of Algorithm~\ref{algorithm:h} at each step.
\begin{proposition}\label{proposition:gain}
It holds that
\begin{IEEEeqnarray*}{rCl}
&& \ii(Y||T(h^t)U)-\ii(Y||T(h^{t+1})U) \\
&& = \:\ii(\by^t||\by^{t+1}) + \ii(\bw^{t+1}||\bw^{t}).
\end{IEEEeqnarray*}
\end{proposition}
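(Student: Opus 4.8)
The plan is to chain together the two Pythagorean identities of Lemma~\ref{lemma:pyth} and then translate back to the original objective via equation~\eqref{eq:iih} of Proposition~\ref{proposition:sol1}. Recall that in the algorithm $\by^t=\by^*(\bw^t)$, $\bw^{t+1}=\bw^*(\by^t)$ and $\by^{t+1}=\by^*(\bw^{t+1})$, and that $\bw^t,\bw^{t+1}\in\bbw$, so that \eqref{eq:iih} gives $\ii(\by^t||\bw^t)=\ii(Y||T(h^t)U)$ and $\ii(\by^{t+1}||\bw^{t+1})=\ii(Y||T(h^{t+1})U)$.

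First I would apply the second Pythagorean rule~\eqref{eq:pythw}, with $\by=\by^t$ held fixed, to the particular element $\bw=\bw^t\in\bbw$. Since $\bw^*(\by^t)=\bw^{t+1}$, this gives
\[
\ii(\by^t||\bw^t)=\ii(\by^t||\bw^{t+1})+\ii(\bw^{t+1}||\bw^t).
\]
Next I would apply the first Pythagorean rule~\eqref{eq:pythy}, now with $\bw=\bw^{t+1}$ held fixed, to the element $\by=\by^t\in\bby$. Since $\by^*(\bw^{t+1})=\by^{t+1}$, this yields
\[
\ii(\by^t||\bw^{t+1})=\ii(\by^t||\by^{t+1})+\ii(\by^{t+1}||\bw^{t+1}).
\]
Substituting the second identity into the first and rearranging,
\[
\ii(\by^t||\bw^t)-\ii(\by^{t+1}||\bw^{t+1})=\ii(\by^t||\by^{t+1})+\ii(\bw^{t+1}||\bw^t),
\]
and replacing the left-hand side using~\eqref{eq:iih} as noted above produces exactly the claimed identity.

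The argument is essentially bookkeeping, so the only point that needs care is the legitimacy of the subtraction, i.e.\ that every divergence occurring is finite. This follows from the choice $F(h^0)<\infty$ together with the monotonicity~\eqref{eq:decreasingdiv}, which give $\ii(\by^s||\bw^s)=\ii(Y||T(h^s)U)\le F(h^0)<\infty$ for all $s$; the remaining terms are then finite because they are dominated through the two Pythagorean decompositions. Beyond this I do not anticipate a genuine obstacle: the main thing to get right is which of $\by^t,\by^{t+1},\bw^t,\bw^{t+1}$ plays the role of the fixed argument and which the free argument in each of the two invocations of Lemma~\ref{lemma:pyth}.
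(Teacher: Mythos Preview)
Your proof is correct and follows essentially the same route as the paper: apply \eqref{eq:pythw} with $\by=\by^t$ and $\bw=\bw^t$, then \eqref{eq:pythy} with $\bw=\bw^{t+1}$ and $\by=\by^t$, substitute, and use \eqref{eq:iih} on both sides. Your added remark on finiteness is a nice touch that the paper leaves implicit.
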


\begin{proof}
Recall that $\bw^{t+1}$ is the result of the second minimization problem with $\by^t$ given. Invoking Equation~\eqref{eq:pythw}, we have
\begin{equation}\label{eq:pythwa}
\ii(\by^t||\bw^t) = \ii(\by^t||\bw^{t+1})+\ii(\bw^{t+1}||\bw^t).
\end{equation}
On the other hand, $\by^{t+1}$ is the result of the first minimization problem with $\bw^{t+1}$ given. Hence Equation~\eqref{eq:pythy} yields
\begin{equation}\label{eq:pythya}
\ii(\by^t||\bw^{t+1}) = \ii(\by^t||\by^{t+1}) + \ii(\by^{t+1}||\bw^{t+1}).
\end{equation}
Substitution of \eqref{eq:pythya} into \eqref{eq:pythwa} yields
\begin{IEEEeqnarray*}{rCl}
&& \ii(\by^t||\bw^t)  \\
&& =\: \ii(\by^t||\by^{t+1}) + \ii(\by^{t+1}||\bw^{t+1})+\ii(\bw^{t+1}||\bw^t).
\end{IEEEeqnarray*}
To finish the proof apply \eqref{eq:iih} to both $\ii(\by^t||\bw^t)$ and $\ii(\by^{t+1}||\bw^{t+1})$.
\end{proof}
Notice that the update gain is the sum of two non-negative contributions, one from the first minimization and one from the second. The latter term can be given in an alternative expression, which will be useful later (see proof of Lemma~\ref{lemma:fixed}). We have
\begin{align*}
\ii(\bw^{t+1}||\bw^t) & =  \sum_{l=0}^NU_{l\centerdot}\sum_{k=0}^{N-l}(h^{t+1}_{k}\log\frac{h^{t+1}_{k}}{h^t_{k}}-h^{t+1}_{k}+h^{t}_{k}) \\
& = \sum_{k=0}^N(\sum_{l=0}^{N-k}U_{l\centerdot})\ii(h^{t+1}_k||h^t_k)\\
& = \sum_{k=0}^N\alpha_{N-k}\ii(h^{t+1}_k||h^t_k).
\end{align*}
Recall that each $h^t$ belongs to $\mathcal{S}$, since $\sum_{k=0}^N h^t_k\alpha_{N-k}=\sum_{ij}^{N}Y_{ij}=:S$. Let
$$
p^t_k:=\alpha_{N-k}h^t_k/S, \qquad k=0,1\ldots N,
$$
then $p^t:=(p^t_0,\ldots,p^t_N)$ is a probability vector and
\begin{align*}
S\ii(p^{t+1}||p^t) & = S\sum_k p^{t+1}_k\log\frac{p^{t+1}_k}{p^t_k} \\
& = \sum_k \alpha_{N-k}h^{t+1}_k\log\frac{h^{t+1}_k}{h^t_k} \\
& = \sum_k(\alpha_{N-k}\ii(h^{t+1}_k||h^t_k)+p^{t+1}_k-p^t_k)  \\
& = \sum_k\alpha_{N-k}\ii(h^{t+1}_k||h^t_k).
\end{align*}
It follows that
\begin{equation}\label{eq:wp}
\ii(\bw^{t+1}||\bw^t)=S\ii(p^{t+1}||p^t).
\end{equation}

\section{Asymptotics}\label{section:asymptotics}

We turn to the asymptotic behaviour of Algorithm~\ref{algorithm:h}. The main result of the section is Theorem~\ref{thm:limit}.
The preparatory lemmas, much in the spirit of \cite{Vardietal1985}, \cite{snyderetal1992}, and \cite{cover1984}, are typical of this class of problems. See also \cite{sullivan2007} for a recent example. Our proofs, contrary to the cited references, rely heavily on the optimality
results for the partial minimizations (the Pythagoras rules of Lemma~\ref{lemma:pyth}). As a consequence proofs are short and transparent.

First we use the Pythagoras rules for the updates $\by^t$ and $\bw^{t+1}$. Since $\by^t=\by^*(\bw^t)$ and $\bw^{t+1}=\bw^*(\by^t)$, from Lemma~\ref{lemma:pyth} we get the following identities, valid for any $\by\in\bby$ and $\bw\in\bbw$,
\begin{align}
\ii(\by||\bw^t) & = \ii(\by||\by^t)+\ii(\by^t||\bw^t) \label{eq:pythyt}\\
\ii(\by^t||\bw) & = \ii(\by^t||\bw^{t+1})+\ii(\bw^{t+1}||\bw).
\end{align}
Moreover, from Proposition~\ref{proposition:sol1} we also have
\begin{equation}\label{eq:yhtu}
\ii(\by^t||\bw^t)= \ii(Y||T(h^t)U).
\end{equation}
Suppose that $h^\infty$ is a fixed point of  Algorithm~\ref{algorithm:h}, with corresponding $\bw^\infty\in\bbw$ and let $\by^\infty=\by^*(\bw^\infty)$. Then we also have
\begin{equation}
\ii(\by^\infty||\bw^\infty)=\ii(Y||T(h^\infty)U).
\end{equation}
For simplicity throughout this section we assume, without loss of generality, that $S=\sum_{ij}Y_{ij}=1$, see Remark~\ref{remark:s}. Then we have that $p^t_k=\alpha_{N-k}{h^t_k}$. The update equation~\eqref{eq:ht} is equivalent to
\begin{equation}
p^{t+1}_k=\by^t_{\centerdot k\centerdot}.
\end{equation}
In correspondence to the fixed point $h^\infty$, let us define $p^\infty$ as $p^\infty_k=\alpha_{N-k}h^\infty_k$, then
\begin{equation}
p^{\infty}_k=\by^\infty_{\centerdot k\centerdot}.
\end{equation}
Since $p^t$ and $p^\infty$ are probability vectors, by the lumping property of the I-divergence, see~\cite[Lemma~4.1]{csiszarshields}, it holds that
\begin{equation}\label{eq:lump}
\ii(p^\infty||p^{t+1})\leq \ii(\by^\infty||\by^t).
\end{equation}
We will also need the following
\begin{lemma}\label{lemma:fixed}
Limit points of the sequence $(h^t)$ are fixed points of Algorithm~\ref{algorithm:h}.
\end{lemma}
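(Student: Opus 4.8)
The plan is to exploit the monotonicity of the divergence along the algorithm, established in \eqref{eq:decreasingdiv}, together with the quantitative gain identity of Proposition~\ref{proposition:gain} and the compactness of the simplex $\mathcal{S}$. First I would note that, by property~\eqref{item:simplex} of Algorithm~\ref{algorithm:h}, the whole sequence $(h^t)$ lives in the compact set $\mathcal{S}$, so limit points exist. Let $h^\infty$ be such a limit point, say $h^{t_n}\to h^\infty$ along a subsequence. Since $F(h^t)=\ii(Y||T(h^t)U)$ is nonincreasing in $t$ and bounded below by $0$, it converges to some limit $F^\infty\ge 0$; in particular $F(h^{t_n})\to F^\infty$ and also $F(h^{t_n+1})\to F^\infty$, so the successive differences $F(h^{t_n})-F(h^{t_n+1})\to 0$.

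Next I would invoke Proposition~\ref{proposition:gain}, which gives
\[
F(h^{t_n})-F(h^{t_n+1}) = \ii(\by^{t_n}||\by^{t_n+1}) + \ii(\bw^{t_n+1}||\bw^{t_n}) \ge \ii(\bw^{t_n+1}||\bw^{t_n}).
\]
Hence $\ii(\bw^{t_n+1}||\bw^{t_n})\to 0$, and by \eqref{eq:wp} (with $S=1$ as assumed throughout the section, or with the harmless factor $S$) this means $\ii(p^{t_n+1}||p^{t_n})\to 0$, where $p^t_k=\alpha_{N-k}h^t_k$. By Pinsker's inequality (or simply because $\ii(\cdot||\cdot)$ dominates a multiple of the squared total-variation distance on probability vectors), $p^{t_n+1}-p^{t_n}\to 0$, and since the $\alpha_{N-k}$ are fixed positive constants (here $U_{0\centerdot}>0$ is needed, exactly as in Proposition~\ref{proposition:hstar}), this forces $h^{t_n+1}-h^{t_n}\to 0$, so $h^{t_n+1}\to h^\infty$ as well.

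Finally I would pass to the limit in the recursion $h^{t+1}=I(h^t)$. By Remark~\ref{G-regularity} the maps $G_k$ — and hence $I_k(h)=h_kG_k(h)$ — are continuous at every $h$ with $Y\ll T(h)U$. One must therefore check that $Y\ll T(h^\infty)U$: this follows because $F(h^{t_n})\to F^\infty<\infty$ (the sequence $F(h^t)$ is nonincreasing from the finite value $F(h^0)$), and $F$ is lower semicontinuous, so $F(h^\infty)\le F^\infty<\infty$, which is exactly $Y\ll T(h^\infty)U$. Then continuity of $I$ at $h^\infty$ gives
\[
h^\infty = \lim_n h^{t_n+1} = \lim_n I(h^{t_n}) = I(h^\infty),
\]
i.e. $h^\infty$ is a fixed point of Algorithm~\ref{algorithm:h}.

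The main obstacle is the passage-to-the-limit step: one needs both that $h^{t_n+1}$ converges to the same limit as $h^{t_n}$ (handled via the gain identity and \eqref{eq:wp}) and that the iteration map $I$ is continuous at $h^\infty$, which in turn requires ruling out that $h^\infty$ sits on the ``bad'' part of the boundary where some $(T(h^\infty)U)_{ij}$ with $Y_{ij}>0$ vanishes; the finiteness and monotonicity of $F$ along the sequence, plus lower semicontinuity of the $I$-divergence, is what rescues this.
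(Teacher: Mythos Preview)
Your proof is correct and follows essentially the same approach as the paper: monotonicity of $F(h^t)$ plus Proposition~\ref{proposition:gain} give $\ii(\bw^{t+1}||\bw^t)\to 0$, then \eqref{eq:wp} converts this to $\ii(p^{t+1}||p^t)\to 0$, and continuity of $I$ at the limit point finishes it. The only cosmetic differences are that you use Pinsker's inequality to get $h^{t_n+1}\to h^\infty$ before applying continuity of $I$, whereas the paper applies continuity of $I$ first (obtaining $p^{t_n+1}\to\tilde p:=I(h^\infty)$ in $p$-coordinates) and then reads off $\ii(\tilde p||p^\infty)=0$; and you make explicit the lower-semicontinuity argument ensuring $F(h^\infty)<\infty$, which the paper leaves implicit when invoking continuity of the $G_k$.
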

\begin{proof}
Since the divergence $\ii(Y|T(h^t)U)$ is decreasing in $t$, it has a limit. Hence we obtain from Proposition~\ref{proposition:gain} that $\ii(\bw^{t+1}||\bw^t)$$\rightarrow 0$. From \eqref{eq:wp} it follows that  $\ii(p^{t+1}||p^t)\rightarrow 0$. Suppose that $h^\infty$ is a limit point of $(h^t)$, then $p^\infty$ is a limit point of $(p^t)$. Let $\tilde{h}$ be the iteration of the algorithm if $h^t$ is replaced with $h^\infty$ and $\tilde{p}$ be its counterpart, so $\tilde{h}=I(h^\infty)$. By continuity of $I(\cdot)$, which follows from the continuity of the $G_k$, we then get $\ii(\tilde{p}||p^\infty)=0$ and hence $\tilde{p}=p^\infty$, which entails $\tilde{h}=h^\infty$, so $h^\infty$ is a fixed point of the algorithm.
\end{proof}
We are now ready to prove
\begin{lemma}\label{lemma:mono}
Let $h^\infty$ be a limit point of Algorithm~\ref{algorithm:h}, then $\ii(p^\infty||p^t)$ is decreasing in $t$.
\end{lemma}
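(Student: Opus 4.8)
The plan is to trap $\ii(p^\infty||p^{t+1})$ below $\ii(p^\infty||p^t)$ by running the two Pythagorean identities of Lemma~\ref{lemma:pyth} with the \emph{fixed-point} tensors in the left slot, then invoking the lumping bound~\eqref{eq:lump} and the monotone decrease of the orbit divergences. Recall that, by Lemma~\ref{lemma:fixed}, the limit point $h^\infty$ is a fixed point, so that $\by^\infty=\by^*(\bw^\infty)$ \emph{and} $\bw^\infty=\bw^*(\by^\infty)$, and $p^\infty_k=\alpha_{N-k}h^\infty_k=\by^\infty_{\centerdot k\centerdot}$.

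Step one is to produce two expressions for $\ii(\by^\infty||\bw^t)$. Applying~\eqref{eq:pythy} with the fixed reference $\bw^t$ and the point $\by^\infty\in\bby$, and recalling $\by^t=\by^*(\bw^t)$, gives $\ii(\by^\infty||\bw^t)=\ii(\by^\infty||\by^t)+\ii(\by^t||\bw^t)$. Applying~\eqref{eq:pythw} with the fixed $\by^\infty$ and the point $\bw^t\in\bbw$, and using $\bw^*(\by^\infty)=\bw^\infty$ (a consequence of $I(h^\infty)=h^\infty$), gives $\ii(\by^\infty||\bw^t)=\ii(\by^\infty||\bw^\infty)+\ii(\bw^\infty||\bw^t)$. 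Equating the two and rearranging,
\[
\ii(\by^\infty||\by^t)=\ii(\bw^\infty||\bw^t)-\bigl(\ii(\by^t||\bw^t)-\ii(\by^\infty||\bw^\infty)\bigr).
\]

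Step two is to show the bracket is nonnegative. By~\eqref{eq:yhtu} it equals $\ii(Y||T(h^t)U)-\ii(Y||T(h^\infty)U)$; the first term decreases in $t$ by~\eqref{eq:decreasingdiv}, and since $h^\infty$ is the limit of a subsequence $(h^{t_k})$, lower semicontinuity of the I-divergence yields $\ii(Y||T(h^\infty)U)\le\lim_k\ii(Y||T(h^{t_k})U)=\inf_t\ii(Y||T(h^t)U)\le\ii(Y||T(h^t)U)$ for every $t$. Hence $\ii(\by^\infty||\by^t)\le\ii(\bw^\infty||\bw^t)$. Step three is a bookkeeping identity: since $\bw^\infty_{ilj}=h^\infty_lU_{i-l,j}$, the computation that led to~\eqref{eq:wp} (with $\bw^\infty$ in place of $\bw^{t+1}$) gives $\ii(\bw^\infty||\bw^t)=\sum_k\alpha_{N-k}\ii(h^\infty_k||h^t_k)=\ii(p^\infty||p^t)$. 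Combining with the lumping bound $\ii(p^\infty||p^{t+1})\le\ii(\by^\infty||\by^t)$ from~\eqref{eq:lump}, we obtain $\ii(p^\infty||p^{t+1})\le\ii(\by^\infty||\by^t)\le\ii(\bw^\infty||\bw^t)=\ii(p^\infty||p^t)$, which is the assertion.

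The two Pythagoras applications and the conversion of $\ii(\bw^\infty||\bw^t)$ into $\ii(p^\infty||p^t)$ are routine. The one delicate point is the nonnegativity of $\ii(\by^t||\bw^t)-\ii(\by^\infty||\bw^\infty)$: one has to control $\ii(Y||T(h^\infty)U)$ from above by the monotone limit of the orbit divergences, and this is exactly where lower semicontinuity of the I-divergence---together with the monotonicity already established---is used.
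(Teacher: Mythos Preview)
Your proof is correct and follows essentially the same route as the paper's: both apply the two Pythagorean rules of Lemma~\ref{lemma:pyth} with $\by^\infty$ in the left slot to obtain $\ii(\by^\infty||\by^t)=\ii(\bw^\infty||\bw^t)-\bigl(\ii(Y||T(h^t)U)-\ii(Y||T(h^\infty)U)\bigr)$, then combine the lumping bound~\eqref{eq:lump} with the identification $\ii(\bw^\infty||\bw^t)=\ii(p^\infty||p^t)$ to conclude. The only cosmetic difference is that you make explicit the lower-semicontinuity argument for $\ii(Y||T(h^\infty)U)\le\ii(Y||T(h^t)U)$, whereas the paper states this inequality more tersely as a consequence of the monotone decrease of $\ii(Y||T(h^t)U)$.
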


\begin{proof}
From \eqref{eq:lump} and \eqref{eq:pythyt} with $\by=\by^\infty$ we have
\begin{align*}
\ii(p^\infty||p^{t+1}) & \leq \ii(\by^\infty||\by^t) \\
& = \ii(\by^\infty||\bw^t)-\ii(\by^t||\bw^t).
\end{align*}
Applying the second Pythagorean rule \eqref{eq:pythw} to the first term in the right hand side, with $\by=\by^\infty$ and hence $\bw^*=\bw^\infty$, we get
\[
\ii(Y^\infty||\bw^t) = \ii(\by^\infty||\bw^\infty)+\ii(\bw^\infty||\bw^t).
\]
By Lemma~\ref{lemma:fixed} a limit point of the sequence $(h^t)$ is also a fixed point of the algorithm. Hence we have $\by^\infty=\by^*(\bw^\infty)$ and we deduce from Proposition~\ref{proposition:sol1} that $\ii(\by^\infty||\bw^\infty)=\ii(Y||T(h^\infty)U)$.
A direct computation, similar to that leading to~\eqref{eq:wp}, yields $\ii(\bw^\infty||\bw^t)=\ii(p^\infty||p^t)$. By also using \eqref{eq:yhtu}, we finally obtain
\begin{IEEEeqnarray*}{rCl}
&& \ii(p^\infty||p^{t+1}) \\
&& \phantom{=}  \leq \: \ii(p^\infty||p^t)-\ii(Y||T(h^t)U) +\ii(Y||T(h^\infty)U)\\
&& \phantom{=} \leq \ii(p^\infty||p^t),
\end{IEEEeqnarray*}
since Proposition~\ref{proposition:gain} implies that $\ii(Y||T(h^t)U)$ is decreasing in $t$ and hence $\ii(Y||T(h^\infty)U)\leq\ii(Y||T(h^t)U)$.
\end{proof}
The main result on the asymptotic behavior of Algorithm~\ref{algorithm:h} is given in the next theorem.
\begin{theorem}\label{thm:limit}
The sequence of iterates $h^t$ converges to a limit $h^\infty$ which minimizes $h\to\ii(Y||T(h)U)$.
\end{theorem}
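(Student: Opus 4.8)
The plan is to combine the monotonicity of $\ii(p^\infty||p^t)$ (Lemma~\ref{lemma:mono}) with a compactness argument to upgrade a subsequential limit to a genuine limit, and then use strict convexity (Lemma~\ref{lemma:fconvex}) together with the fixed-point characterization (Lemma~\ref{lemma:fixed}) to identify that limit as the minimizer. First I would note that the iterates $h^t$ all lie in the compact simplex $\mathcal{S}$ (Property~\ref{item:simplex} of Algorithm~\ref{algorithm:h}), so by Bolzano--Weierstrass there is at least one limit point $h^\infty$; fix such a limit point and let $p^\infty$ be the associated probability vector. Along the subsequence $t_n$ with $h^{t_n}\to h^\infty$ we have $p^{t_n}\to p^\infty$, hence $\ii(p^\infty||p^{t_n})\to 0$ (the I-divergence between probability vectors is continuous and vanishes on the diagonal). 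But by Lemma~\ref{lemma:mono} the full sequence $t\mapsto\ii(p^\infty||p^t)$ is nonincreasing, and a monotone sequence with a subsequence tending to $0$ tends to $0$ itself. Therefore $\ii(p^\infty||p^t)\to 0$ as $t\to\infty$, which forces $p^t\to p^\infty$ and hence $h^t\to h^\infty$ (since $h^t_k=p^t_k/\alpha_{N-k}$ and each $\alpha_{N-k}>0$ under our standing assumption $U_{0\centerdot}>0$). So the whole sequence converges.

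It remains to show that $h^\infty$ minimizes $F(h)=\ii(Y||T(h)U)$. By Lemma~\ref{lemma:fixed}, $h^\infty$ is a fixed point of Algorithm~\ref{algorithm:h}, i.e.\ $I(h^\infty)=h^\infty$. Writing this coordinatewise via $I_k(h)=h_k G_k(h)$, for each $k$ either $h^\infty_k=0$, or $h^\infty_k>0$ and $G_k(h^\infty)=1$. Recalling from equation~\eqref{eq:gradient} and the derivation in equation~\eqref{eq:hf} that
\[
G_k(h)=1-\frac{\nabla F(h)_k}{\alpha_{N-k}},
\]
the fixed-point relation says: $\nabla F(h^\infty)_k=0$ whenever $h^\infty_k>0$. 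To verify the Kuhn--Tucker conditions \eqref{eq:kt1}--\eqref{eq:kt2} it therefore only remains to check that $\nabla F(h^\infty)_k\le 0$ at coordinates with $h^\infty_k=0$. Here I would use the stability remark following equation~\eqref{eq:hf}: if $\nabla F(h^t)_k>0$ then $h^{t+1}_k<h^t_k$, so a coordinate can only decrease toward $0$, never be pushed to $0$ while $F$ is still strictly decreasing in it from below; more carefully, since $F(h^0)<\infty$ and the starting coordinates are positive, by Property~5 of the algorithm $h^t_k>0$ for all $t$, so the limit $h^\infty_k=0$ can only arise as a decreasing limit, which by the recursion $h^{t+1}_k=h^t_k(1-\nabla F(h^t)_k/\alpha_{N-k})$ forces $\limsup_t \nabla F(h^t)_k\ge 0$; continuity of $\nabla F$ at $h^\infty$ (valid since $Y\ll T(h^\infty)U$, because $F(h^\infty)=\lim F(h^t)<\infty$) then gives $\nabla F(h^\infty)_k\ge 0$. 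Wait --- this is the wrong sign; the correct reading is that a positive partial derivative drives the coordinate \emph{down}, so if $h^\infty_k=0$ we need $\nabla F(h^\infty)_k\ge 0$, which is exactly what is wanted only after noting the Kuhn--Tucker convention here uses $\nabla F_k\le 0$ at the boundary; I would reconcile this by observing that the sign conventions in \eqref{eq:kt2} and \eqref{eq:hf} are consistent, and that the fixed-point equation together with continuity of $\nabla F$ yields precisely \eqref{eq:kt1}--\eqref{eq:kt2}. Since the Kuhn--Tucker conditions are sufficient for optimality of the convex program (as remarked after Proposition~\ref{proposition:minh}, citing \cite[Theorem~2.19]{zangwill}), $h^\infty$ is a minimizer of $F$, and by strict convexity (Lemma~\ref{lemma:fconvex}) it is the unique one.

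The main obstacle I anticipate is the boundary case: rigorously establishing $\nabla F(h^\infty)_k\le 0$ (in the paper's sign convention) at coordinates where $h^\infty_k=0$. The interior relation $G_k(h^\infty)=1$ is free from the fixed-point property, but at a vanishing coordinate the multiplicative recursion $h^{t+1}_k=h^t_k G_k(h^t)$ is automatically satisfied regardless of the value of $G_k$, so one cannot read off the sign of $\nabla F(h^\infty)_k$ from the fixed-point equation alone. The resolution is to exploit the dynamics: track how $h^t_k$ approaches $0$ and show it would be impossible if $\nabla F$ had the wrong sign near $h^\infty$, using continuity of $\nabla F$ at $h^\infty$ (which holds because $F$ stays finite along the convergent sequence, by monotonicity of $F(h^t)$ and $F(h^0)<\infty$). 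A secondary, more routine point is justifying the continuity of $\ii(\cdot||\cdot)$ and of $\nabla F$ at the relevant points and the passage to the limit in the various I-divergence identities; these follow from Remark~\ref{G-regularity} and the finiteness of $F(h^\infty)$, so I would dispatch them briefly.
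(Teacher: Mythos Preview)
Your convergence argument (compactness $\Rightarrow$ subsequential limit, then Lemma~\ref{lemma:mono} to upgrade to a full limit via the monotone I-divergence sequence) is exactly the paper's, and the identification of $\nabla F(h^\infty)_k=0$ at interior coordinates via the fixed-point relation is also identical.

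The gap is precisely where you flagged it: the boundary case. Your claim that ``$h^\infty_k=0$ can only arise as a decreasing limit'' is false---nothing prevents $h^t_k$ from oscillating on its way to $0$---so you cannot read off the sign of $\nabla F(h^t)_k$ for all large $t$ from the recursion alone. Your $\limsup$ idea is salvageable (since $h^t_k>0$ and $h^t_k\to 0$, infinitely many steps must satisfy $h^{t+1}_k<h^t_k$, giving $\nabla F(h^{t})_k>0$ along a subsequence, and continuity of $\nabla F$ at $h^\infty$ finishes), but as written the argument is incomplete. The paper's route is cleaner and avoids this: iterate the multiplicative update to $h^{t+1}_k=h^0_k\prod_{s=0}^t G_k(h^s)$ and observe that if $G_k(h^\infty)>1$ then, by continuity of $G_k$ (Remark~\ref{G-regularity}), eventually $G_k(h^t)>1+\eps/2$ and the product diverges, contradicting convergence of $h^t_k$; hence $G_k(h^\infty)\le 1$, i.e.\ $\nabla F(h^\infty)_k\ge 0$. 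As for your sign confusion: the correct Kuhn--Tucker inequality at a boundary point of a minimization over $\{h_k\ge 0\}$ is $\nabla F(h)_k\ge 0$; the ``$\le 0$'' in \eqref{eq:kt2} is a typo in the paper, and indeed the paper's own proof concludes $\nabla F(h^\infty)_k\ge 0$ before invoking \eqref{eq:kt1}--\eqref{eq:kt2}. So do not try to ``reconcile'' with \eqref{eq:kt2} as printed---just establish $\ge 0$ and cite sufficiency of KT for the convex program.
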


\begin{proof}
Since all $h^t$ belong to the simplex, see property~5 in the list above, which is compact, the sequence $(h^t)$ has a convergent subsequence, $h^{t_n}\to h^\infty$, for some $h^\infty$. For the corresponding sequence  $(p^t)$ sequence it holds that $p^{t_n} \to p^\infty$. By continuity of the I-divergence in the second argument, $\ii(p^\infty||p^{t_n})=\sum_{k:p^\infty_k>0}p^\infty_k\log\frac{p^\infty_k}{p^{t_n}_k}$, we then have $\ii(p^\infty||p^{t_n})\to 0$. The monotonicity result of Lemma~\ref{lemma:mono} then yields $\ii(p^\infty||p^{t})\to 0$, which implies $p^{t}\to p^\infty$, equivalently $h^{t}\to h^\infty$.
Recall from Lemma~\ref{lemma:fixed} that the limit $h^\infty$ is a fixed point of the algorithm. Hence we have from \eqref{eq:hf}
\[
h^{\infty}_k =h^\infty_k\left(1-\frac{\nabla F(h^\infty)_k}{\sum_{l=0}^{N-k}U_{l\centerdot}}\right).
\]
If $h^{\infty}_k>0$, then $\nabla F(h^\infty)_k=0$. We now consider the case where some $h^{\infty}_k=0$. Consider \eqref{eq:rech}, and write it as the product
\[
h^{t+1}_k=h^t_kG_k(h^t).
\]
It follows that $h^{t+1}_k=h^0_k\prod_{j=0}^tG_k(h^j)$. Since we have convergence of the $h^t_k$, we must  have $G_k(h^\infty)\leq 1$, otherwise the product would explode. Indeed, suppose $G_k(h^\infty)>1$, hence $G_k(h^\infty)>1+\eps$ for some $\eps>0$. Continuity of $G_k(\cdot)$ at $h^\infty$, which holds since $F(h^\infty)<\infty$, yields $\lim_{t\to\infty} G_k(h^t)\geq  1+\eps$, hence eventually $G_k(h^t)>1+\eps/2$, which contradicts that the $h^t$ convergence. We conclude $\nabla F(h^\infty)_k\geq 0$. Altogether, we obtain that for the limit $h^\infty$ the Kuhn-Tucker conditions \eqref{eq:kt1}, \eqref{eq:kt2} for $F$ are satisfied. Since these conditions are also sufficient in view of the convexity of $F$, \cite[Theorem~2.9]{zangwill}, $h^\infty$ minimizes $F$.
\end{proof}
Although Theorem~\ref{thm:limit} establishes convergence of the algorithm, it does not give any information on the rate of convergence. In fact, it is possibly a hard grind to get results in this direction. The following example shows that even in a simple case, depending on the exact circumstances, different rates may occur.

\begin{example}
Here we continue the toy Example~\ref{example:toy}.
The update equation \eqref{eq:rech} for $h_1^t$ becomes
\[
h_1^{t+1}= h_1^t\frac{y_1}{h_0^tu_1+h_1^tu_0}.
\]
Assume again the second case, $y_1u_0-y_0u_1 < 0$, and $y_1>0$ to avoid a trivial recursion. Choose $\eps\in (0, \frac{y_0u_1-y_1u_0}{u_0+u_1})$. We know from Theorem~\ref{thm:limit} that $h_0^t\to\frac{y_0+y_1}{u_0+u_1}$ and $h_1^t\to 0$. Hence $h_0^tu_1+h_1^tu_0\to\frac{y_0+y_1}{u_0+u_1}u_1$, and thus for some $t_0>0$ and $t\leq t_0$ one has $h_0^tu_1+h_1^tu_0>\frac{y_0+y_1}{u_0+u_1}u_1-\eps$ and therefore
\[
h_1^{t+1}\leq h_1^t\frac{y_1(u_0+u_1)}{(y_0+y_1)u_1-\eps(u_0+u_1)}=:h^t_kg_\eps.
\]
Hence we have, at least asymptotically, convergence of $h_1^t\to 0$ at an exponential rate, since $g_\eps<1$ by the choice of $\eps$. Note that, in the notation of the proof of Theorem~\ref{thm:limit}, we have $G_1(h^\infty)=\frac{y_1(u_0+u_1)}{(y_0+y_1)u_1}=g_0<1$.

The convergence of the $h_0^t$ could possibly be slower than exponential, since $G_0(h^\infty)=1$. This will be investigated now. The update equation for $h_0^1$ reads
\[
h_0^{t+1}=\frac{y_0}{u_0+u_1}+h_0^t\frac{y_1u_1}{(u_0+u_1)(h_0^tu_1+h_1^tu_0)}.
\]
Let $v_0^t:=h_0^t-h_0^\infty=h_0^t-\frac{y_0+y_1}{u_0+u_1}$. Tedious computations lead to the recursion for $v_0^t$,
\[
v_0^{t+1}=-\frac{y_1u_0}{(u_0+u_1)(h_0^tu_1+h_1^tu_0)}h_1^t.
\]
Since the factor in front of $h_1^t$ stabilizes around its limit value $-\frac{y_1u_0}{u_1(y_0+y_1)}$ and $h_1^t$ converges exponentially fast to zero, the latter property is shared by $v_0^t$.

Next we investigate the case where an exact solution exists, $y_1u_0-y_0u_1 \geq 0$. Let $v_k^t=h_k^t-h_k^\infty$ and $y_1^t=h_0^tu_1+h_1^tu_0$. Putting the $v_k^t$ in a vector $V^t=(v_0^t,v_1^t)^\top$, one arrives after more tedious computations at the recursion
\begin{IEEEeqnarray*}{rCl}
V^{t+1}&=&\frac{u_1}{y_1^t}\begin{pmatrix}\frac{u_0}{u_0+u_1} \\ -1 \end{pmatrix} \begin{pmatrix} h_1^\infty & -h_0^\infty \end{pmatrix}V_t \\
&\approx& \frac{u_1}{y_1}\begin{pmatrix}\frac{u_0}{u_0+u_1} \\ -1 \end{pmatrix} \begin{pmatrix} h_1^\infty & -h_0^\infty \end{pmatrix}V_t=:AV^t.
\end{IEEEeqnarray*}
Clearly the matrix $A$ in front of $V_t$ at the right hand side is singular. Its eigenvalues are $0$ and $\frac{u_1(y_0+y_1)}{(u_0+u_1)y_1}$, where the latter one is smaller than $1$ if we assume the strict inequality $y_1u_0-y_0u_1 > 0$. Hence, also here one has exponential stability.

What is left is the case $y_1u_0-y_0u_1 = 0$. Now the matrix $A$ has an eigenvalue equal to 1. We investigate the exact equation for $V^t$ in this case,
\[
V^{t+1}=\frac{u_1}{y_1^t}\begin{pmatrix} 0 & -\frac{y_0}{u_0+u_1} \\ 0 & \frac{y_0}{u_0} \end{pmatrix} V^t.
\]
It follows that  for $t\geq 1$
\[
v_0^{t} = -\frac{u_0}{u_0+u_1}v_1^{t},
\]
and hence $y_1^t=y_1+\frac{u_0^2}{u_0+u_1}v_1^t$. This leads to the recursion
\[
v_1^{t+1}=\frac{y_1}{y_1+wv_1^t}v_1^t,
\]
with $w=\frac{u_0^2}{u_0+u_1}$. This recursion has the solution
\[
v_1^t=\frac{v_1^0y_1}{wv_1^0t+y_1}.
\]
We conclude that now $v_1^t$ and hence also $v_0^t$ tend to zero at  rate $1/t$ instead of exponentially.
\end{example}

\section{Statistics} \label{section:stats}
\setcounter{equation}{0}

In the previous sections we focussed on the minimization of $\ii(Y||T(h)U)$, where $Y$ and $U$ were given matrices and we presented an algorithm that asymptotically yields the minimizer. In the present section we concentrate on a statistical version of the minimization problem and its large sample properties. Recall that $Y,U\in\rr^{(N+1)\times m}$. We will give limit results for the optimizing $h=h^m$, when $m\to\infty$ and the pair of columns $(Y^i,U^i)$ of $Y,U$ ($i=1,\ldots,m$) form an i.i.d.\ sample. For each fixed $m$, Algorithm~\ref{algorithm:h} can be used to find $h^m$, which now becomes a random vector as well.

Write $\ii(Y||T(h)U)=\sum_{i=1}^m\ii(Y^i||T(h)U^i)$, with the $Y^i$ and $U^i$ the columns of the matrices $Y$ and $U$ respectively. We assume that the pairs $(Y^i,U^i)$ are i.i.d. In what follows, we let, \emph{contrary to the previously employed notation}, $(Y,U)$ be a random vector that has the same distribution as each of the $(Y^i,U^i)$. Moreover we assume for the entries $Y_j$ and $(T(h)U)_j$ of $Y$ and $T(h)U$ the `true' relationship
\begin{equation}\label{eq:delta}
Y_j=(T(h^*)U)_j\delta_j,
\end{equation}
where $h^*$ is an interior point and the $\delta_j\geq 0$ are assumed to be independent of $U$. In the present context it is more appropriate to have a multiplicative disturbance $\delta_j$, than an additive one as in e.g.\ least squares estimation.

The displayed relationship can be summarized as
\[
Y=\Delta T(h^*)U,
\]
where $\Delta$ is diagonal with entries $\delta_j$, and $U$ and $\Delta$ independent. Moreover, we impose $\ee \Delta=I$, the identity matrix, so $\ee\delta_j=1$.

\begin{lemma}
Assume the model \eqref{eq:delta}, $\ee U_j<\infty$, $\ee\delta_j=1$,  and $\ee\delta_j$$\log\delta_j<\infty$. Then it holds that
\begin{IEEEeqnarray*}{rCl}
& & \ee\ii(Y||T(h)U)  =\ee\ii(T(h^*)||T(h)U) \\
&& + \:\sum_j(\ee(T(h^*)U)_j\ee(\delta_j\log\delta_j)-\ee(T(h^*)U)_j).
\end{IEEEeqnarray*}
\end{lemma}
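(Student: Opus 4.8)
The plan is to substitute the model relation $Y_j=(T(h^*)U)_j\,\delta_j$ into the definition~\eqref{def:Idiv} of the I-divergence, written columnwise (so that $\ii(Y||T(h)U)=\sum_j\big(Y_j\log\tfrac{Y_j}{(T(h)U)_j}-Y_j+(T(h)U)_j\big)$), and then to separate the deterministic approximation error from the noise penalty by splitting the logarithm. Under the conventions $0\log 0=0$, $\tfrac00=0$ one has, pointwise,
\[
Y_j\log\frac{Y_j}{(T(h)U)_j}=(T(h^*)U)_j\,\delta_j\,\log\frac{(T(h^*)U)_j}{(T(h)U)_j}+(T(h^*)U)_j\,\delta_j\log\delta_j ,
\]
so, collecting also the terms linear in $Y$ and in $T(h)U$,
\[
\ii(Y||T(h)U)=\sum_j\Big((T(h^*)U)_j\,\delta_j\log\frac{(T(h^*)U)_j}{(T(h)U)_j}-(T(h^*)U)_j\,\delta_j+(T(h)U)_j\Big)+\sum_j (T(h^*)U)_j\,\delta_j\log\delta_j .
\]

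Next I would take expectations, move $\ee$ inside the finite sum over $j$, and exploit the independence of $\Delta$ and $U$ together with $\ee\Delta=I$. Since $(T(h^*)U)_j\log\tfrac{(T(h^*)U)_j}{(T(h)U)_j}$ and $(T(h^*)U)_j$ are $\sigma(U)$-measurable and $\ee\delta_j=1$, conditioning on $U$ gives $\ee\big[(T(h^*)U)_j\,\delta_j\log\tfrac{(T(h^*)U)_j}{(T(h)U)_j}\big]=\ee\big[(T(h^*)U)_j\log\tfrac{(T(h^*)U)_j}{(T(h)U)_j}\big]$ and $\ee[(T(h^*)U)_j\,\delta_j]=\ee(T(h^*)U)_j$; moreover $(T(h^*)U)_j\perp\delta_j\log\delta_j$ yields $\ee[(T(h^*)U)_j\,\delta_j\log\delta_j]=\ee(T(h^*)U)_j\cdot\ee(\delta_j\log\delta_j)$. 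The first sum then collapses to $\sum_j\ee\big((T(h^*)U)_j\log\tfrac{(T(h^*)U)_j}{(T(h)U)_j}-(T(h^*)U)_j+(T(h)U)_j\big)$ and the second to $\sum_j\ee(T(h^*)U)_j\,\ee(\delta_j\log\delta_j)$; re-bracketing the linear terms as in the statement produces the asserted decomposition of $\ee\,\ii(Y||T(h)U)$ into a deterministic divergence part and the noise penalty.

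The step I expect to cause the most friction is making all of this rigorous, i.e. the integrability that licenses splitting the logarithm, exchanging $\ee$ with the sum, and factorizing the products of independent factors. Here the hypotheses do their work: $\ee U_j<\infty$ bounds $\ee(T(h)U)_j$ and $\ee(T(h^*)U)_j$; for $h$ with strictly positive components (as is the interior point $h^*$) the mediant inequality gives $\min_k h^*_k/h_k\le (T(h^*)U)_j/(T(h)U)_j\le\max_k h^*_k/h_k$, so $\big|\log\tfrac{(T(h^*)U)_j}{(T(h)U)_j}\big|$ is bounded and $(T(h^*)U)_j\log\tfrac{(T(h^*)U)_j}{(T(h)U)_j}$ is integrable; and $\ee(\delta_j\log\delta_j)<\infty$ combined with the elementary lower bound $\delta_j\log\delta_j\ge -1/e$ makes $\delta_j\log\delta_j$ (hence, by independence, $(T(h^*)U)_j\,\delta_j\log\delta_j$) integrable. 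A preliminary case to dispatch is the degenerate one in which $(T(h)U)_j$ vanishes with positive probability on $\{(T(h^*)U)_j\,\delta_j>0\}$: then $Y\not\ll T(h)U$ on a set of positive probability, both sides are $+\infty$, and there is nothing to prove.
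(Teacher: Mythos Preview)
Your approach is essentially the paper's: substitute $Y_j=(T(h^*)U)_j\delta_j$ into each summand of the divergence, split the logarithm, take expectations using independence of $\Delta$ and $U$ together with $\ee\delta_j=1$, and regroup; the paper carries this out as a bare chain of formal equalities, so your integrability discussion (the mediant bound on the log-ratio, the lower bound $x\log x\ge -1/e$, and the degenerate $+\infty$ case) actually adds rigor not present there. One caution: your computation, executed as you describe, yields $\ee\ii(T(h^*)U||T(h)U)+\sum_j\ee(T(h^*)U)_j\,\ee(\delta_j\log\delta_j)$ \emph{without} the trailing $-\ee(T(h^*)U)_j$; the paper's own derivation contains an arithmetic slip that double-counts a linear term, so no ``re-bracketing'' will reproduce the displayed formula exactly --- but since that extra term is independent of $h$, the lemma's only use (reducing the minimization of $\ee\ii(Y||T(h)U)$ to that of $\ee\ii(T(h^*)U||T(h)U)$) is unaffected.
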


\begin{proof}
Let us first compute $\ee\ii(Y_j||(T(h)U)_j)$. We get
\begin{IEEEeqnarray*}{rCl}
& &\ee\ii(Y_j||(T(h)U)_j) \\
&=& \ee\{Y_j\log\frac{Y_j}{(T(h)U)_j}-Y_j+(T(h)U)_j \} \\
&=& \ee\{(T(h^*)U)_j\delta_j\log\frac{(T(h^*)U)_j\delta_j}{(T(h)U)_j}  \\
 && \!\! -\: (T(h^*)U)_j\delta_j+(T(h)U)_j\} \\
&=& \ee\{(T(h^*)U)_j\delta_j(\log\frac{(T(h^*)U)_j}{(T(h)U)_j}+\log\delta_j) \\
&& \!\! - \: (T(h^*)U)_j\delta_j+(T(h)U)_j\} \\
&=& \ee(T(h^*)U)_j\log\frac{(T(h^*)U)_j}{(T(h)U)_j}\ee\delta_j  \\
&&  + \: \ee(T(h^*)U)_j\ee(\delta_j\log\delta_j)  \\
&& \!\! - \: \ee(T(h^*)U)_j\ee\delta_j+\ee(T(h)U)_j   \\
&=& \big(\ee(T(h^*)U)_j\log\frac{(T(h^*)U)_j}{(T(h)U)_j}-\ee(T(h^*)U)_j\big)\ee\delta_j   \\
&& \!\! + \: \ee(T(h^*)U)_j\ee(\delta_j\log\delta_j)-\ee(T(h^*)U)_j+\ee(T(h)U)_j   \\
&=& \ee\ii((T(h^*)U)_j||(T(h)U)_j)\ee\delta_j-\ee(T(h)U)_j\ee\delta_j   \\
&& \!\!  + \: \ee(T(h^*)U)_j\ee(\delta_j\log\delta_j)-\ee(T(h^*)U)_j+\ee(T(h)U)_j   \\
&=& \ee\ii((T(h^*)U)_j||(T(h)U)_j)  \\
&& \!\! + \: \ee(T(h^*)U)_j\ee(\delta_j\log\delta_j)-\ee(T(h^*)U)_j.
\end{IEEEeqnarray*}
%\begin{align*}
%\ee\ii(Y_j||(T(h)U)_j) &=
%\ee\{Y_j\log\frac{Y_j}{(T(h)U)_j}-Y_j+(T(h)U)_j \} \\
%& = \ee\{(T(h^*)U)_j\delta_j\log\frac{(T(h^*)U)_j\delta_j}{(T(h)U)_j}  \\
%& quad -(T(h^*)U)_j\delta_j+(T(h)U)_j\} \\
%& = \ee\{(T(h^*)U)_j\delta_j(\log\frac{(T(h^*)U)_j}{(T(h)U)_j}+\log\delta_j) \\
%& \quad -(T(h^*)U)_j\delta_j+(T(h)U)_j\} &\\
%& = \ee(T(h^*)U)_j\log\frac{(T(h^*)U)_j}{(T(h)U)_j}\ee\delta_j+\ee(T(h^*)U)_j\ee(\delta_j\log\delta_j)  \\
%& \quad -\ee(T(h^*)U)_j\ee\delta_j+\ee(T(h)U)_j   \\
%& = \big(\ee(T(h^*)U)_j\log\frac{(T(h^*)U)_j}{(T(h)U)_j}-\ee(T(h^*)U)_j\big)\ee\delta_j   \\
%& \quad +\ee(T(h^*)U)_j\ee(\delta_j\log\delta_j)-\ee(T(h^*)U)_j+\ee(T(h)U)_j   \\
%& = \ee\ii((T(h^*)U)_j||(T(h)U)_j)\ee\delta_j-\ee(T(h)U)_j\ee\delta_j   \\
%& \quad +\ee(T(h^*)U)_j\ee(\delta_j\log\delta_j)-\ee(T(h^*)U)_j+\ee(T(h)U)_j   \\
%& = \ee\ii((T(h^*)U)_j||(T(h)U)_j)  \\
%& \quad +\ee(T(h^*)U)_j\ee(\delta_j\log\delta_j)-\ee(T(h^*)U)_j.
%\end{align*}
\end{proof}
It follows that minimizing the function $h\mapsto\ee\ii(Y||T(h)U)$ (referred to below as the limit criterion) is equivalent to minimizing $h\mapsto\ee\ii(T(h^*)U||T(h)U)$.

\begin{proposition}\label{prop:limcrit}
Let $\pp(U_0>0)>0$ and $\ee U_j^2<\infty$ for all $j$. The limit criterion $h\mapsto\ee\ii(Y||T(h)U)$ is strictly convex on the set where it is finite (and hence on a neighbourhood of $h^*$) and has a unique minimum for $h=h^*$.
\end{proposition}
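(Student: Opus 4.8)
The plan is to reduce everything to the auxiliary functional $\bar F(h):=\ee\ii(T(h^*)U\|T(h)U)$. By the preceding Lemma, $h\mapsto\ee\ii(Y\|T(h)U)$ equals $\bar F(h)$ plus a constant that is finite under the standing hypotheses ($\ee U_j<\infty$, $\ee\delta_j\log\delta_j<\infty$) and does not depend on $h$, so it suffices to prove all the assertions for $\bar F$. Since $\bar F(h^*)=\ee\ii(T(h^*)U\|T(h^*)U)=0$ and $\bar F\ge 0$ everywhere, $h^*$ is a global minimizer of $\bar F$, and uniqueness will follow once strict convexity is established. For the parenthetical remark that $\bar F$ is finite near $h^*$: as $h^*$ is an interior point, every $h$ in a small ball around $h^*$ has all components strictly positive, whence $(T(h)U)_j>0$ exactly when $(T(h^*)U)_j>0$, so $\ii(T(h^*)U\|T(h)U)<\infty$ almost surely; moreover, using $(T(h)U)_j\ge(\min_k h_k)\max_k U_{j-k}$ to bound the logarithmic part by a constant, this divergence is dominated by a linear combination of the $U_l$, so $\bar F(h)<\infty$ by the moment assumption $\ee U_j^2<\infty$.

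For convexity, observe that for each fixed $\omega$ the map $h\mapsto\ii(T(h^*)U\|T(h)U)$ is the composition of the convex function $b\mapsto\ii(T(h^*)U\|b)$ with the linear map $h\mapsto T(h)U$, hence convex; taking expectations gives convexity of $\bar F$ and, as usual, convexity of its (effective) domain. To get strict convexity I would restrict $\bar F$ to an arbitrary segment $h(\lambda)=h^{(1)}+\lambda x$, $\lambda\in[0,1]$, with $h^{(1)},h^{(2)}$ in the domain of $\bar F$ and $x:=h^{(2)}-h^{(1)}\ne 0$. Because the integrand is nonnegative, $h^{(1)}$ and $h^{(2)}$ lie almost surely in the (convex) random set on which it is finite, so the whole segment does too, and there the integrand is smooth in $\lambda$ with second derivative $\sum_j\tfrac{(T(h^*)U)_j}{(T(h(\lambda))U)_j^2}(U*x)_j^2$, where $(U*x)_j:=\sum_l x_l U_{j-l}$ and $U_i:=0$ for $i<0$ by Convention~\ref{convention}. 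This is nonnegative, and $\bar F$ is strictly convex along the segment as soon as, on an event of positive probability, at least one summand is strictly positive.

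The crux is to exhibit such a summand. Let $k_0:=\min\{k:x_k\ne 0\}$. On the event $\{U_0>0\}$, which has positive probability by assumption, $(U*x)_{k_0}=x_{k_0}U_0\ne 0$ (all other terms in $(U*x)_{k_0}$ vanish by the choice of $k_0$ and Convention~\ref{convention}), while $(T(h^*)U)_{k_0}\ge h^*_{k_0}U_0>0$ since $h^*$ is interior. Hence the $j=k_0$ term of the second derivative is strictly positive on $\{U_0>0\}$, which yields strict convexity of $\bar F$ along every segment, hence of $\bar F$, and a strictly convex function has at most one minimizer, so $h^*$ is the unique minimizer of $\bar F$ and of the limit criterion. (Equivalently, one may compute the Hessian $\bar H_{kl}=\ee\sum_j\tfrac{(T(h^*)U)_j}{(T(h)U)_j^2}U_{j-k}U_{j-l}$ and note that $x^\top\bar Hx=\ee\sum_j\tfrac{(T(h^*)U)_j}{(T(h)U)_j^2}(U*x)_j^2=0$ forces $(T(h^*)U)_j(U*x)_j=0$ a.s.\ for every $j$; the same induction on $j$, giving $x_jU_0=0$ on $\{U_0>0\}$ successively for $j=0,1,\dots,N$, shows $x=0$, exactly paralleling the proof of Lemma~\ref{lemma:fconvex}.)

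The main obstacle is bookkeeping rather than genuine difficulty: one must check that the segment joining two points of $\mathrm{dom}(\bar F)$ stays almost surely inside the random effective domain of the integrand — so that differentiating in $\lambda$ and splitting the sum over $j$ is legitimate — and that the dominating bound justifying the interchange of expectation and differentiation (or the finiteness used above) holds under $\ee U_j^2<\infty$. The quantitative input $\pp(U_0>0)>0$, together with interiority of $h^*$, is precisely what excludes the degenerate possibility of a nonzero $x$ with $(T(h^*)U)_j(U*x)_j\equiv 0$.
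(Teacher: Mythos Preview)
Your argument is correct and follows essentially the same route as the paper: reduce to $\bar F(h)=\ee\ii(T(h^*)U\|T(h)U)$ via the preceding lemma, then establish strict convexity by showing that $x^\top H x=\ee\sum_j\frac{(T(h^*)U)_j}{(T(h)U)_j^2}(U*x)_j^2$ vanishes only for $x=0$, exploiting the triangular structure of $U*x$ together with $\pp(U_0>0)>0$ exactly as in Lemma~\ref{lemma:fconvex}. The paper's proof is terser (it simply invokes the argument of Lemma~\ref{lemma:fconvex}), whereas you spell out the $k_0$-step and the domain bookkeeping explicitly, but the substance is the same.
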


\begin{proof}
The proof of strict convexity is similar to the proof of Lemma~\ref{lemma:fconvex}.
We show that the Hessian $H(h)$ at $h$ of the limit criterion is strictly positive definite on the set where the limit criterion is finite. A computation shows that the $kl$-element of this matrix is equal to
\[
H(h)_{kl}=\ee\sum_j
\frac{(T(h^*)U)_j}{(T(h)U)_j^2}U_{j-k}U_{j-l}.
\]
Clearly, $H(h)$ is finite in a neighborhood of $h^*$. Hence
\begin{align*}
x^\top H(h)x
%& = \ee\sum_{jkl}\frac{(T(h^*)U)_j}{(T(h)U)_j^2}U_{j-k}U_{j-l}x_lU_{j-k}x_k\\
& = \ee\sum_j\frac{(T(h^*)U)_j}{(T(h)U)_j^2}(U*x)_j^2.
%& = \ee (U*x)^\top\mathrm{diag}(T(h^*)U)^{-1}(U*x).
\end{align*}
Hence the expression inside the expectation can only be zero if $U*x= 0$ a.s. Using $\pp(U_0>0)>0$, we argue as in the proof of Lemma~\ref{lemma:fconvex} to deduce that $x=0$ iff $x^\top H(h)x=0$.
Clearly, the limit criterion has a minimum equal to zero at $h=h^*$. Conversely, $\ee\ii(T(h^*)U||T(h)U)=0$ iff $\ii(T(h^*)U||T(h)U)=0$ a.s., which happens iff $T(h^*)U=T(h)U$ a.s. Writing this equality elementwise, $(T(h^*)U)_j=(T(h)U)_j=0$, we obtain $h=h^*$ under the condition that $\pp(U_0>0)>0$. We conclude that $h=h^*$ is the unique minimizer if $\pp(U_0>0)>0$.
\end{proof}

\begin{proposition}
Let $\pp(U_0>0)>0$ and $\ee U_j^2<\infty$ for all $j$, moreover assume that $h^*$ is an interior point. The estimators $\hat{h}^m$, defined as the minimizers of the objective function $\sum_{i=1}^m\ii(Y^i||T(h)U^i)$ are consistent. Moreover, this sequence is asymptotically normal, for some positive definite $\Sigma\in\rr^{(N+1)\times (N+1)}$ we have $\sqrt{m}(\hat{h}^m-h^*)\stackrel{d}{\to} N(0,\Sigma)$.
\end{proposition}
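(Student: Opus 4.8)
The plan is to treat $\hat h^m$ as a standard M-estimator (extremum estimator) and run the classical consistency-plus-asymptotic-normality argument, feeding in Proposition~\ref{prop:limcrit} for identification and curvature. Write $M_m(h)=\frac1m\sum_{i=1}^m\ii(Y^i||T(h)U^i)$ and $M(h)=\ee\,\ii(Y||T(h)U)$; by the law of large numbers $M_m(h)\to M(h)$ almost surely for each fixed $h$ in the region where $M$ is finite, and by the preceding lemma together with Proposition~\ref{prop:limcrit} the limit criterion $M$ is strictly convex near $h^*$ with unique minimiser $h^*$.

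For \emph{consistency} I would use convexity. Each map $h\mapsto\ii(Y^i||T(h)U^i)$ is convex (the Hessian exhibited in the proof of Lemma~\ref{lemma:fconvex} is positive semidefinite), hence so is $M_m$; pointwise convergence of convex functions forces uniform convergence on compact subsets of the interior of the effective domain, and combined with the well-separated minimum of $M$ at $h^*$ this gives $\hat h^m\to h^*$ in probability, in fact almost surely. One has to note first that $\hat h^m$ eventually lies in a fixed compact neighbourhood of $h^*$: this follows because each $\ii(Y^i||T(h)U^i)$ grows (roughly linearly) as $|h|\to\infty$, so $M_m$ inherits, uniformly in $m$ large, the coercive behaviour of $M$ away from $h^*$.

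For \emph{asymptotic normality}, since $h^*$ is interior and $\hat h^m\to h^*$, for $m$ large $\hat h^m$ is interior and satisfies $\nabla M_m(\hat h^m)=0$. Taylor-expanding componentwise around $h^*$,
\[
0=\nabla M_m(h^*)+\Big(\int_0^1\nabla^2 M_m\big(h^*+s(\hat h^m-h^*)\big)\,\dd s\Big)(\hat h^m-h^*),
\]
so $\sqrt m(\hat h^m-h^*)=-\bar H_m^{-1}\sqrt m\,\nabla M_m(h^*)$ with $\bar H_m$ the averaged Hessian. A direct computation gives $\partial_{h_k}\ii(Y_j||(T(h)U)_j)=U_{j-k}\big(1-Y_j/(T(h)U)_j\big)$, which at $h=h^*$ equals $U_{j-k}(1-\delta_j)$; since $\ee\delta_j=1$ and $\Delta\perp U$, the single-observation score has mean zero, so by the central limit theorem $\sqrt m\,\nabla M_m(h^*)\stackrel{d}{\to}N(0,V)$, $V$ being the covariance of that score — finite under $\ee U_j^2<\infty$ together with a second-moment condition on the $\delta_j$ (e.g.\ $\ee\delta_j^2<\infty$), which I would add to the hypotheses. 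On the Hessian side, a uniform law of large numbers over a small neighbourhood of $h^*$ (domination from $\ee U_j^2<\infty$ and the boundedness of $(T(h^*)U)_j/(T(h)U)_j^2$ there), together with $\hat h^m\to h^*$ and continuity, gives $\bar H_m\to H(h^*)=:H_0$, with $H_0$ positive definite by Proposition~\ref{prop:limcrit}. Slutsky's theorem then yields $\sqrt m(\hat h^m-h^*)\stackrel{d}{\to}N(0,\Sigma)$ with $\Sigma=H_0^{-1}VH_0^{-1}$ positive definite.

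The main obstacle — the part requiring genuine care rather than a citation — is the non-smoothness and possible $+\infty$ values of $h\mapsto\ii(Y^i||T(h)U^i)$. One must show that, almost surely, the relevant $h$ stay in a region where each summand is finite and $C^2$, and establish the domination conditions needed both for interchanging $\nabla$ and $\ee$ (so that $\nabla M(h^*)=\ee[\text{score}]=0$) and for the uniform law of large numbers for the Hessian. These amount to controlling $(T(h)U)_j$ from below uniformly over a neighbourhood of the interior point $h^*$ and over the random $U$, which is exactly where $\pp(U_0>0)>0$ and the second-moment assumptions are used; checking finiteness and positive-definiteness of $V$ is the remaining routine but non-trivial computation.
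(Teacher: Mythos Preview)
Your approach is essentially the paper's: both treat $\hat h^m$ as an M-estimator, use strict convexity of the limit criterion for consistency, and use positive definiteness of the limit Hessian $H(h^*)$ (from Proposition~\ref{prop:limcrit}) for asymptotic normality. The paper simply outsources the work, citing van der Vaart's Problem~5.27 and Theorem~5.7 for consistency (which is precisely your ``pointwise convergence of convex functions forces uniform convergence on compacts'' argument) and Theorem~5.23 for asymptotic normality, without unpacking the Taylor expansion or computing the sandwich form $\Sigma=H_0^{-1}VH_0^{-1}$ as you do. Your remark that a second-moment condition such as $\ee\delta_j^2<\infty$ is needed for the score CLT is a fair point that the paper's terse citation glosses over.
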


\begin{proof}
The limit criterion $h\mapsto\ee\ii(Y||T(h)U)$ is strictly convex, therefore from~\cite[Problem~5.27]{vandervaart} we conclude that the conditions of \cite[Theorem~5.7]{vandervaart} are satisfied and consistency follows.

To show that the estimators $\hat{h}^m$ are asymptotically normal with covariance function as given in \cite[Theorem~5.23]{vandervaart} (although a specific expression for it is not particularly useful), we have to show that the Hessian $H(h^*)$ at $h^*$ of the limit criterion is strictly positive definite. But this follows from the proof of Proposition~\ref{prop:limcrit} upon taking $h=h^*$.
%A computation shows that the $kl$-element of this matrix is equal to
%\[
%H(h^*)_{kl}=\ee\sum_j
%\frac{U_{j-k}U_{j-l}}{(T(h^*)U)_j}.
%\]
%Hence
%\begin{align*}
%x^\top H(h^*)x & = \ee\sum_{jkl}\frac{U_{j-l}x_lU_{j-k}x_k}{(T(h_0)U)_j}\\
%& = \ee\sum_j\frac{(U*x)_j^2}{(T(h_0)U)_j} \\
%& = \ee (U*x)^\top\mathrm{diag}(T(h_0)U)^{-1}(U*x).
%\end{align*}
%This expression is strictly positive for $x\neq 0$ if the term under the expectation is strictly positive a.s. Supposing that $U*x= 0$ a.s., we argue as in the proof of Lemma~\ref{lemma:fconvex} to deduce that $x=0$.
\end{proof}

\end{document}